\documentclass[10pt,dvips]{article}
\usepackage[a5paper,layoutwidth=148.5mm,layoutheight=7.77817in,margin=36pt,footskip=19pt]{geometry}
\usepackage{amssymb,amstext,amsmath}

\usepackage{natbib}
\bibpunct{(}{)}{;}{a}{}{,}
\newcommand\urlprefix{}
\setlength\bibsep{\smallskipamount}

\usepackage[utf8]{inputenc}
\usepackage[T1]{fontenc}
\usepackage[french,german,english]{babel}

\usepackage{amsthm}
\usepackage{etoolbox}
\makeatletter\def\th@plain{\slshape}\makeatother
\makeatletter\patchcmd{\th@remark}{\itshape}{\slshape}{}{}\makeatother

\AtBeginDocument{}

\newtheorem{theorem}{Theorem}[section]
\newtheorem{corollary}[theorem]{Corollary}
\newtheorem{lemma}[theorem]{Lemma}
\newtheorem{proposition}[theorem]{Proposition}

\theoremstyle{definition}

\theoremstyle{remark}

\newtheorem{example}[theorem]{Example}
\newtheorem{remark}[theorem]{Remark}

\usepackage{stmaryrd}
\usepackage[shortlabels]{enumitem}
\setlist{noitemsep}

\newcommand{\tS}{S_{\rM}}
\newcommand{\rM}{\mathrm{M}}
\newcommand\lrb[1] {\llbracket #1 \rrbracket}
\usepackage{mathtools}
\mathtoolsset{mathic}
\PassOptionsToPackage{hyphens}{url}
\PassOptionsToPackage{hidelinks}{hyperref}
\usepackage{hyperref}
\usepackage{breakurl}
\usepackage{doi}
\ifx\texorpdfstring\undefined\newcommand\texorpdfstring[2]{#1}\fi
\begin{document}

\title{
  Regular entailment relations}

\author{Thierry Coquand \and Henri Lombardi \and Stefan Neuwirth}


%
\maketitle

\begin{abstract} 

Inspired by the work of Lorenzen on the theory of preordered groups in the forties and fifties, 
we define regular entailment relations and show a crucial theorem for this 
structure. We also describe equivariant systems of ideals à la Lorenzen and show 
that the remarkable regularisation process invented by him yields a regular 
entailment relation. By providing constructive objects and arguments, we pursue Lorenzen's aim of ``bringing to light the basic, pure concepts in 
their simple and transparent clarity''.\smallskip

\noindent {\bf Keywords:}
preordered group; unbounded entailment relation; regular entailment relation; system of ideals;  equivariant system of ideals; morphism from a preordered group to a lattice-preordered group; Lorenzen-Clifford-Dieudonné theorem.\smallskip

\noindent {\bf MSC 2020:}
  Primary 06F20; Secondary 06F05, 13A15, 13B22.

\end{abstract}

\section*{Introduction}

Paul Lorenzen carried out, in a series of four articles, an analysis of multiplicative ideal theory in terms of embeddings into an $l$-group. In \citealt{Lor1939}, he formulated the problem in the language of semigroups instead of integral domains. The endeavour of \citealt{Lor1950} was to remove the condition of commutativity; the unavailability of the Grothendieck group construction led him to discover the ``regularity condition'' and to propose a far-reaching reformulation of embeddability into a product of linearly preordered groups in terms of ``regularisation''.  He also arrived at the formulation of the concept of equivariant system of ideals, as below, and of entailment relation. The article \citealt{Lor1952} broadened his analysis to the more general case  of a monoid acting on a preordered set. Our research started as a study of \citealt{Lor1953}, in which he proved a result that suggested Theorem~\ref{thmain} to us.

 If $(G,0,+,-,\leqslant)$ is a preordered commutative group and we have a morphism $f:G\rightarrow L$ with $L$ an $l$-group, then we can define a relation $A\vdash B$ between \emph{nonempty} finite subsets of $G$
by $\bigwedge f(A)\leqslant_L \bigvee f(B)$. This relation satisfies the following conditions.
\begin{itemize}
\item [$(R_1)$] $A\vdash B$ if $A\supseteq A'$ and $B\supseteq B'$ and $A'\vdash B'$\hfill(weakening);
\item [$(R_2)$] $A\vdash B$ if $A,x\vdash B$ and $A\vdash B,x$\hfill(cut);
\item [$(R_3)$] $a\vdash b$ if $a\leqslant b$ in $G$;
\item [$(R_4)$] $A\vdash B$ if $A+x\vdash B+x$\hfill(translation);
\item [$(R_5)$] $a+x,b+y\vdash a+b,x+y$\hfill(regularity).
\end{itemize}
We make the following
abuses of notation for finite sets: we write $a$ for the
singleton consisting of~$a$, and $A,A'$ for the union of the sets~$A$
and~$A'$; note that our framework requires only a naive set theory.
 We call any relation which satisfies
these conditions a \emph{regular entailment relation} for the preordered group~$G$. The remarkable last condition is called the \emph{regularity condition}.

Note that the converse of a regular entailment relation for $(G,0,+,-,\leqslant)$ is a regular entailment relation for $(G,0,+,-,\geqslant)$ (the group with the converse preorder). When we use this, we say that a result follows from another one \emph{symmetrically}.

 Any relation satisfying the first three conditions defines in a canonical way a(n unbounded)
distributive lattice $L$ with a natural monotone map $G\to L$: see \citealt[Satz~7]{Lor1951}; \citealt[Theorem~1 (obtained independently)]{CeCo2000}. 

The goal of this note is essentially to show that this distributive lattice has
a (canonical) $l$-group structure, simplifying some arguments in \citealt{Lor1953}.
This is done in Theorem~\ref{thmain}. In Section 2, we explain how to define a regular entailment relation through a predicate on nonempty finite subsets of $G$.
In Section 3, we define ``equivariant systems of ideals'' à la Lorenzen and we show how to express this notion through a predicate on nonempty finite subsets of $G$. 
In Section 4, we explain how Lorenzen ``regularises'' an equivariant system of ideals, which leads to the Lorenzen group of this system of ideals (Theorem~\ref{thLorGroup}). In Section 5, we explain the link with a constructive version of the Lorenzen-Clifford-Dieudonné theorem.
In Section 6, we explain the link with the Prüfer way of defining the Lorenzen group of a system of ideals.
In Section~7, we give a constructive version of a remarkable theorem of Lorenzen
which uses the regularity condition in the noncommutative case. Finally, in Section 8,
we give examples illustrating some constructions described in the paper.

\smallskip 
The results of this research complement the ones of  \cite{CLN2018}: we introduce
various equivalent presentations of regular entailment relations and we also provide
a noncommutative version and several examples.

\section{General properties of regular entailment relations}
 
 A first consequence of regularity is the following.

\begin{proposition}\label{main1}
We have \(a,b\vdash a+x,b-x\) and \(a+x,b-x\vdash a,b\). In particular,
\(a\vdash a+x,a-x\) and \(a+x,a-x\vdash a\).
\end{proposition}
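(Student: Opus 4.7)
My plan is to prove both sequents as direct instances of the regularity axiom $(R_5)$, which reads
\[a'+x',\,b'+y' \vdash a'+b',\,x'+y'\]
for arbitrary $a',b',x',y'\in G$. The only real work is spotting the right substitution for the four free parameters so that the four required expressions match up.

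For the first sequent $a,b\vdash a+x,b-x$, I would substitute $(a',x',b',y')\mapsto (a,0,x,b-x)$ into $(R_5)$. The four resulting terms are $a'+x'=a$, $b'+y'=x+(b-x)=b$, $a'+b'=a+x$, and $x'+y'=0+(b-x)=b-x$, which is exactly what we want.

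For the second sequent $a+x,b-x\vdash a,b$, I would apply $(R_5)$ once more, this time with $(a',x',b',y')\mapsto (a,x,0,b-x)$, which yields $a'+x'=a+x$, $b'+y'=b-x$, $a'+b'=a$ and $x'+y'=x+(b-x)=b$. (One could equivalently invoke the symmetry principle via the converse preorder, since $(R_5)$ is self-dual, but the direct substitution seems more transparent.)

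Finally, the ``in particular'' assertions $a\vdash a+x,a-x$ and $a+x,a-x\vdash a$ are obtained by specialising $b=a$ in the two sequents above and then appealing to the authors' notational convention that $A,A'$ denotes \emph{set-theoretic} union, so that $\{a,a\}=\{a\}$; no further use of weakening or any other axiom is needed. The main obstacle is essentially nil: once one solves the small linear system that matches the pattern of $(R_5)$ to the prescribed left- and right-hand sides, the proof is a one-line application.
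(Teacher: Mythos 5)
Your proof is correct and takes essentially the same approach as the paper: both are single direct instantiations of the regularity axiom $(R_5)$, differing only in the choice of substitution (the paper uses $x+(a-x),(b-2x)+2x\vdash x+(b-2x),(a-x)+2x$ and derives the second sequent by symmetry, whereas you use the cleaner substitution with $x'=0$ and prove the second sequent by a second direct substitution). Both verifications check out, and your handling of the ``in particular'' case via the set-theoretic convention matches the paper's implicit reasoning.
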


\begin{proof}
By regularity, we have $x + (a-x),(b - 2x) + 2x \vdash 
 x+(b-2x),(a-x)+2x$, which is $a,b\vdash a+x,b-x$. The other
claim follows symmetrically.
\end{proof}

\begin{corollary}
  In the distributive lattice \(L\) defined by the (unbounded)
  entailment relation~\(\vdash\),
  \(\bigwedge A\leqslant (\bigwedge(A+x))\vee (\bigwedge(A-x))\).
\end{corollary}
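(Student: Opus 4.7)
The plan is to translate the claimed lattice inequality into a family of entailments and then invoke Proposition~\ref{main1} together with weakening~$(R_1)$. Recall that in the canonical distributive lattice $L$ associated with $\vdash$, the element $\bigwedge A$ is the meet of the generators corresponding to the elements of $A$, and the fundamental property is that $A\vdash B$ holds iff $\bigwedge A\leqslant \bigvee B$ in $L$.

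The first step is to rewrite the right-hand side. Using distributivity in $L$ we have
\[
\bigwedge(A+x)\,\vee\,\bigwedge(A-x)=\bigwedge_{(a,b)\in A\times A}\bigl((a+x)\vee(b-x)\bigr).
\]
Hence the inequality to be proved is equivalent to the conjunction, over all pairs $(a,b)\in A\times A$, of the inequalities $\bigwedge A\leqslant (a+x)\vee(b-x)$; equivalently, of the entailments $A\vdash a+x,\,b-x$.

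The second step is to produce each such entailment. By the first assertion of Proposition~\ref{main1}, $a,b\vdash a+x,\,b-x$ holds in $G$ for every $a,b\in A$. Since $A\supseteq\{a,b\}$, weakening~$(R_1)$ yields $A\vdash a+x,\,b-x$, which is exactly what we need.

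I do not foresee a genuine obstacle: the only subtlety is recognising the distributive rewriting of $\bigwedge(A+x)\vee\bigwedge(A-x)$ as a meet of binary joins, which reduces the corollary to an immediate application of Proposition~\ref{main1} combined with weakening. The mirror inequality (if one ever needs it) would follow symmetrically from the second assertion of Proposition~\ref{main1}.
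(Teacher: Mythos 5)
Your proof is correct and follows essentially the same route as the paper: rewrite $\bigwedge(A+x)\vee\bigwedge(A-x)$ as $\bigwedge_{a,b\in A}\bigl((a+x)\vee(b-x)\bigr)$ by distributivity and then apply Proposition~\ref{main1} (with weakening) to each pair. You have merely spelled out the details that the paper leaves implicit.
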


\begin{proof}
  In~$L$, we have
  \({(\bigwedge_{a\in A}(a+x))}\vee {(\bigwedge_{b\in A} (b-x))}={\bigwedge_{a,b\in A}((a+x)\vee(b-x))}\), so that
  this follows from Proposition~\ref{main1}.
\end{proof}

\begin{corollary}\label{R1}
If we have \(A,A+x\vdash B\) and \(A,A-x\vdash B\), then \(A\vdash B\). Symmetrically, if \(A\vdash B,B+x\) and
\(A\vdash B,B-x\), then \(A\vdash B\).
\end{corollary}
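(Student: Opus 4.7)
The plan is to derive $A \vdash B$ by two applications of an iterated form of cut, starting from Proposition~\ref{main1}. I first record the following \emph{iterated cut lemma}, provable by induction on $|C|$ from $R_1$ and $R_2$: if $A', C \vdash B'$ and $A' \vdash B', c$ for every $c \in C$, then $A' \vdash B'$. In the inductive step one cuts a chosen $c_\star \in C$; the induction hypothesis, applied with $A'$ augmented by $c_\star$ and $C$ replaced by $C \setminus \{c_\star\}$, supplies the premise $A', c_\star \vdash B'$ needed for the cut.

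\emph{Step 1: derive $A \vdash B, a_k - x$ for each $a_k \in A$.} I apply the lemma with $A' = A$, $B' = B \cup \{a_k - x\}$ and $C = A+x$. The first premise $A, A+x \vdash B, a_k - x$ follows from the first hypothesis of the corollary by weakening. For each $a_i + x \in A+x$, Proposition~\ref{main1} applied to the pair $(a_i, a_k)$ gives $a_i, a_k \vdash a_i + x, a_k - x$, which weakens to the second premise $A \vdash B, a_k - x, a_i + x$. The lemma then yields $A \vdash B, a_k - x$.

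\emph{Step 2.} I apply the lemma with $A' = A$, $B' = B$ and $C = A-x$. The first premise $A, A-x \vdash B$ is the second hypothesis of the corollary, and the family $A \vdash B, a_k - x$ was just established in Step~1. The lemma concludes $A \vdash B$. The symmetric statement follows by applying this argument to the converse preorder on $G$, which as noted in the introduction is again a regular entailment relation.

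The main obstacle is Step~1: the first hypothesis of the corollary is used precisely to strengthen the easy consequence $A \vdash B, A+x, A-x$ of Proposition~\ref{main1} (obtained by weakening from any single $a \vdash a+x, a-x$) into the individual entailments $A \vdash B, a_k - x$ without the spurious $A+x$ on the right. Without such a strengthening, Step~2 could not invoke the iterated cut lemma, since weakening cannot be used to extract $A, a_k - x \vdash B$ from the second hypothesis $A, A-x \vdash B$.
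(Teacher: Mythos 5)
Your proof is correct. The iterated cut lemma is sound (the induction on $|C|$ goes through: the premise $A',c_\star,C\setminus\{c_\star\}\vdash B'$ needed for the inductive call is literally $A',C\vdash B'$ as sets, and the side premises weaken correctly), Step 1 correctly instantiates Proposition~\ref{main1} as $a_i,a_k\vdash a_i+x,a_k-x$ and weakens both sides, and Step 2 then discharges $A-x$ against the second hypothesis. The appeal to the converse preorder for the symmetric half matches the convention set out in the introduction.

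The route differs from the paper's in presentation rather than in substance. The paper derives Corollary~\ref{R1} with no written proof, as an immediate consequence of the preceding corollary, which restates Proposition~\ref{main1} inside the generated distributive lattice $L$ as $\bigwedge A\leqslant(\bigwedge(A+x))\vee(\bigwedge(A-x))$; from there the hypotheses read $\bigwedge A\wedge\bigwedge(A+x)\leqslant\bigvee B$ and $\bigwedge A\wedge\bigwedge(A-x)\leqslant\bigvee B$, and distributivity in $L$ gives $\bigwedge A\leqslant\bigvee B$ in one line. Your argument performs exactly that distributivity computation by hand, at the level of the entailment relation, with the iterated cut lemma playing the role of the lattice identity $\bigwedge_{a}(u\vee v_a)=u\vee\bigwedge_a v_a$. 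What the paper's route buys is brevity, at the cost of invoking the representation of $(R_1)$--$(R_3)$ by a distributive lattice; what yours buys is a self-contained derivation inside the calculus of rules $(R_1)$, $(R_2)$ and Proposition~\ref{main1}, which is arguably more in the spirit of an entailment-relation presentation and makes explicit which cuts are being used.
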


\begin{lemma}\label{R2}
We have \(A,A+x\vdash B\) iff \(A\vdash B,B-x\).
\end{lemma}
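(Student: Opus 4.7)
The plan is to prove each direction by the same two-move trick: translate the hypothesis by $\pm x$ to produce a companion entailment, then combine the original and the companion via weakening and Corollary~\ref{R1}.

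For the forward direction, assume $A, A+x\vdash B$. Applying translation (R$_4$) by $-x$ yields $A-x, A\vdash B-x$. Now weaken both: the hypothesis gives $A, A+x\vdash B, B-x$, and the translated version gives $A, A-x\vdash B, B-x$. Reading Corollary~\ref{R1} with $B$ replaced by the finite set $B, B-x$, we cancel the $A\pm x$ on the left and conclude $A\vdash B, B-x$.

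For the backward direction, assume $A\vdash B, B-x$. Translation by $+x$ gives $A+x\vdash B+x, B$. Weakening produces $A, A+x\vdash B, B-x$ from the hypothesis and $A, A+x\vdash B, B+x$ from its translate. Applying the symmetric form of Corollary~\ref{R1} (with $A$ replaced by $A, A+x$) cancels the $B\pm x$ on the right and yields $A, A+x\vdash B$.

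The content of the argument is just the observation that the two sides of the equivalence become comparable after one translation: the left-hand statement already contains the ``$+x$'' shift on the left, and a $-x$ translation creates the matching ``$-x$'' shift, so Corollary~\ref{R1} applies. There is no real obstacle beyond spotting this symmetry; once chosen, the remaining steps are a mechanical use of (R$_1$), (R$_4$) and the corollary in its two forms.
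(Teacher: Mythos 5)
Your proof is correct and follows essentially the same route as the paper: for the forward direction, translate the hypothesis by $-x$, weaken both entailments to the common conclusion $B,B-x$, and apply Corollary~\ref{R1}; the paper handles the backward direction by symmetry, which unwinds to exactly the argument you give with the symmetric form of the corollary.
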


\begin{proof}
We assume $A,A+x\vdash B$ and we prove $A\vdash B,B-x$. 
By Corollary~\ref{R1}, it is enough
to show $A,A-x\vdash B,B-x$, but this follows from $A,A+x\vdash B$ by translating by $-x$
and then weakening. The other direction is symmetric.
\end{proof}

\begin{lemma}\label{conv}
If \(0\leqslant p\leqslant q\), then \(a,a+qx \vdash a+px\).
\end{lemma}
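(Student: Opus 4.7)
The plan is induction on $q$. The base case $q=0$ forces $p=0$, so the claim $a,a \vdash a$ is weakening of $a\vdash a$, which comes from $(R_3)$. In the inductive step, the boundary cases $p=0$ and $p=q$ are handled by the same weakening, so I may assume $0<p<q$.

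The first move is to extract a useful disjunctive entailment from $(R_5)$. Reading the rule as $\alpha+\xi,\beta+\eta \vdash \alpha+\beta,\xi+\eta$ and choosing $\alpha=a$, $\xi=0$, $\beta=(q-p)x$, $\eta=a+px$, I obtain
\[
a,\, a+qx \;\vdash\; a+(q-p)x,\ a+px.
\]
The lemma will then follow by cut $(R_2)$ on the term $a+(q-p)x$, provided I can establish the companion entailment $a,\, a+qx,\, a+(q-p)x \vdash a+px$.

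To establish that companion I invoke the inductive hypothesis, splitting on whether $p\leqslant q-p$ or $p>q-p$. When $p\leqslant q-p$, the inductive hypothesis at $q'=q-p<q$ yields $a,\, a+(q-p)x \vdash a+px$ directly, and weakening by $a+qx$ finishes. When $p>q-p$, I apply the inductive hypothesis with $-x$ in place of $x$, at $q'=p<q$ and $p'=q-p$ (so $0\leqslant p'\leqslant q'$), obtaining $a,\, a-px \vdash a-(q-p)x$; translating by $+qx$ via $(R_4)$ turns this into $a+qx,\, a+(q-p)x \vdash a+px$, and again weakening by $a$ finishes. Cut with the disjunctive entailment then closes the induction.

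The main obstacle is identifying the correct intermediate term on which to cut: once one sees that regularity, paired with the target $a+px$, naturally produces $a+(q-p)x$ as the partner on the right-hand side, the rest is routine, since $q-p$ is precisely the value that makes the induction on $q$ work no matter which of the two halves $p$, $q-p$ is the smaller.
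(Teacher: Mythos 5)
Your proof is correct. It shares with the paper's proof the overall skeleton---induction on $q$, with a regularity instance supplying a two-element succedent and cut eliminating the auxiliary element---but the inductive decomposition is genuinely different. The paper runs a simple induction $q\to q+1$: the regularity instance $a,a+(q+1)x\vdash a+x,a+qx$ splits off a single step $x$, a cut with the induction hypothesis yields the case $p=1$, and the general case $p+1$ is then obtained by translating the induction hypothesis by $x$ and cutting once more against $a,a+(q+1)x\vdash a+x$. You instead aim directly at a general $p$, choosing the regularity instance $a,a+qx\vdash a+(q-p)x,a+px$ that splits $q$ as $(q-p)+p$; the price is that you need strong induction (the hypothesis at both $q-p$ and $p$, each $<q$ when $0<p<q$), a case distinction according to which of $p$ and $q-p$ is smaller, and the substitution $x\mapsto -x$ in the induction hypothesis (legitimate, since the statement being proved is universally quantified over $a$ and $x$) followed by a translation via $(R_4)$. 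Both routes are valid derivations from $(R_1)$--$(R_5)$: yours avoids the two-stage bootstrap through $p=1$ at the cost of the case split and the appeal to strong rather than simple induction, which you should state explicitly.
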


\begin{proof}
We prove this by induction on $q$. It holds for $q = 0$ and $q = 1$. If it holds for $q\geqslant1$,
we note that we have $a,a+(q+1)x \vdash a + x,a+qx$ by regularity, and since
$a,a+qx\vdash a+x$ by induction hypothesis, we get $a,a+(q+1)x\vdash a+x$ by cut.
By induction hypothesis, we have $a,a+qx\vdash a+px$ for $p\leqslant q$, and hence
$a+x,a+(q+1)x\vdash a+(p+1)x$. By cut with $a,a+(q+1)x\vdash a+x$ 
we get $a,a+(q+1)x\vdash a+(p+1)x$.
\end{proof}

 Given a regular entailment relation $\vdash$ and an element $x$, we now describe the 
regular entailment relation $\vdash_x$ for which we force $0\vdash_x x$. This relation
exists by universal algebra, but let us define
that $A\vdash_x B$ holds iff there exists $p$ such that $A,A+px\vdash B$, iff (by Lemma~\ref{R2}) there
exists $p$ such that $A\vdash B,B-px$. We are going to show that this is the least regular
entailment relation containing $\vdash$ and such that  $0\vdash_x x$. 
We have $0\vdash_x x$ since $0,x\vdash x$.

 Note that, by using Lemma~\ref{conv}, if we have $A,A+px\vdash B$, we also have
$A,A+qx\vdash B$ for $q\geqslant p$.

\begin{proposition}\label{main2}
The relation \(\vdash_x\) is a regular entailment relation. It is the least regular entailment
relation containing \(\vdash\) and such that \(0\vdash_x x\).
\end{proposition}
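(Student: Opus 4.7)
The plan is to verify the five axioms for $\vdash_x$ and then establish minimality. Four of the axioms follow easily from the definition $A \vdash_x B \Leftrightarrow \exists p\colon A, A+px \vdash B$: weakening $(R_1)$ and translation $(R_4)$ carry over because the witness $p$ is preserved under these operations, while the preorder axiom $(R_3)$ and regularity $(R_5)$ lift from $\vdash$ by taking $p = 0$. Moreover $\vdash \subseteq \vdash_x$ (take $p = 0$), and $0 \vdash_x x$ holds since $0, x \vdash x$ (weakening of $x \vdash x$).

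The main work is cut $(R_2)$. Assume $A, y \vdash_x B$ and $A \vdash_x B, y$ witnessed by integers $p$ and $q$, and use the observation preceding the proposition to raise both to a common $r = \max(p, q)$, obtaining
\[
A, y, A+rx, y+rx \vdash B \qquad \text{and} \qquad A, A+rx \vdash B, y.
\]
Viewing the left-hand sequent with $A \cup \{y\}$ in place of $A$ in Lemma~\ref{R2} converts it to $A, y \vdash B, B-rx$. Weaken both $A, A+rx \vdash B, y$ and $A, y \vdash B, B-rx$ to the common form with left-hand side $A \cup (A+rx)$ and right-hand side $B \cup (B-rx)$, and cut on $y$: this yields $A, A+rx \vdash B, B-rx$. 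Apply Lemma~\ref{R2} once more, this time with $A \cup (A+rx)$ playing the role of $A$, to rewrite this sequent as $A, A+rx, A+2rx \vdash B$. Finally, for each $a \in A$, the instance $a, a+2rx \vdash a+rx$ of Lemma~\ref{conv} together with cut eliminates $a+rx$ from the left-hand side; iterating over $A$ produces $A, A+2rx \vdash B$, which exhibits $A \vdash_x B$ with witness $2r$.

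For minimality, let $\vdash'$ be any regular entailment relation containing $\vdash$ with $0 \vdash' x$. Translation gives $a \vdash' a+x$ for every $a$, and an induction on $p$ using cut yields $a \vdash' a+px$ for all $p \geqslant 0$. Given $A \vdash_x B$ witnessed by $A, A+px \vdash B$, containment gives $A, A+px \vdash' B$, and iterated cut against the sequents $a \vdash' a+px$ eliminates each $a+px$ from the left-hand side, producing $A \vdash' B$. The main obstacle is the cut step for $\vdash_x$: one must identify the right alternation of Lemma~\ref{R2} (which trades a translate on one side for a translate on the other) and Lemma~\ref{conv} (which absorbs the resulting shifts) so that the cut on $y$ lands in a sequent reshapeable back into the form $A, A+p'x \vdash B$.
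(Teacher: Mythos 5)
Your proof is correct and follows essentially the same route as the paper's: equalize the two witnesses via Lemma~\ref{conv}, use Lemma~\ref{R2} to trade a translate on the left for $B-rx$ on the right, cut on the auxiliary element, and absorb the resulting middle translate $A+rx$ by Lemma~\ref{conv} again, ending at the same sequent $A,A+2rx\vdash B$. The remaining differences are cosmetic — you apply Lemma~\ref{R2} a second time where the paper instead works with the set $C=A,A+y,A+2y$ and invokes Corollary~\ref{R1} directly, and you spell out the easy axioms and the minimality argument, which the paper leaves implicit.
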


\begin{proof}
The only complex condition is the cut rule. We assume $A,A+px\vdash B,u$ and
$A,A+qx,u,u+qx\vdash B$, and we prove $A\vdash_x B$. By Lemma~\ref{conv}, we can assume $p=q$.
We write $y = px$ and we have $A,A+y\vdash B,u$ and $A,A+y,u,u+y\vdash B$. 
We write $C = A,A+y,A+2y$ and we prove $C\vdash B$.

 We have, by weakening, $C\vdash B,u$ and $C,u,u+y\vdash B$ and $C\vdash B+y,u+y$.
By cut, we get $C,u\vdash B,B+y$. By Lemma~\ref{R2}, this is equivalent to $C,u,C-y,u-y\vdash B$.
We also have $C,u,C+y,u+y\vdash B$ by weakening $C,u,u+y\vdash B$. Hence by Lemma~\ref{R1}
we get $C,u\vdash B$. Since we also have $C\vdash B,u$, we get $C\vdash B$ by cut.

By Lemma~\ref{conv} we have $A,A+2y\vdash B$, which shows $A\vdash_x B$.
\end{proof}

\begin{proposition}\label{main3}
If \(A\vdash_x B\) and \(A\vdash_{-x} B\), then \(A\vdash B\).
\end{proposition}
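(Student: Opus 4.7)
The plan is to unfold the two hypotheses using the definition of $\vdash_x$ and reduce to a direct application of Corollary~\ref{R1}.

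First, I unfold $A \vdash_x B$ to obtain some $p$ with $A,A+px \vdash B$, and $A \vdash_{-x} B$ to obtain some $q$ with $A,A-qx \vdash B$. The two natural numbers $p$ and $q$ need not agree, so my next step is to align them: using the observation (just after Lemma~\ref{conv}) that $A,A+px \vdash B$ entails $A,A+rx \vdash B$ for every $r \geqslant p$, and the symmetric fact applied to $-x$, I pick $r = \max(p,q)$ and get both $A,A+rx \vdash B$ and $A,A-rx \vdash B$.

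At this point Corollary~\ref{R1}, applied with $rx$ playing the role of $x$, immediately yields $A \vdash B$, which is the desired conclusion.

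There is essentially no obstacle: the regularity machinery has already been packaged into Lemma~\ref{conv} and Corollary~\ref{R1}, and the only subtlety is remembering to synchronise the two existential witnesses $p$ and $q$ before invoking Corollary~\ref{R1}. The proof should therefore be just a few lines long.
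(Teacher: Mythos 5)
Your proposal is correct and matches the paper's proof: both unfold the definitions to get witnesses $p$ and $q$, use Lemma~\ref{conv} (via the remark following it) to synchronise them, and conclude with Corollary~\ref{R1} applied to $rx$. No issues.
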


\begin{proof}
We have $A,A+px\vdash B$ and $A,A-qx\vdash B$. Using Lemma~\ref{conv} we can assume $p=q$
and then conclude by Corollary~\ref{R1}.
\end{proof}

 Proposition~\ref{main3} implies that in order to prove an entailment involving certain elements, we can
always assume that all elements occurring in the proof are 
linearly preordered for the relation $a \vdash b$.
This corresponds to the informal covering principle by quotients for $l$-groups \cite[Principle XI-2.10]{CACM}. Here are two direct applications.

\begin{proposition}\label{key}
We have \(A\vdash b_1,\dots,b_m\) iff \(A-b_1,\dots,A-b_m\vdash 0\).
\end{proposition}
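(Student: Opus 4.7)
The plan is to proceed by induction on~$m$, invoking Proposition~\ref{main3} at each step. The base case $m=1$ is immediate from translation ($R_4$): $A\vdash b_1$ iff $A-b_1\vdash 0$. For $m\geqslant 2$, I set $x=b_{m-1}-b_m$ and, by Proposition~\ref{main3}, prove the desired entailment separately in $\vdash_x$ and $\vdash_{-x}$, both of which are regular entailment relations by Proposition~\ref{main2} (so the induction hypothesis is available inside each). Translating $0\vdash_x x$ by $R_4$ gives $b_m\vdash_x b_{m-1}$, so that on the right-hand side of an entailment in $\vdash_x$ the element $b_m$ is absorbed by $b_{m-1}$; symmetrically, in $\vdash_{-x}$ it is $b_{m-1}$ that is absorbed by $b_m$.

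For $\Rightarrow$: starting from $A\vdash b_1,\dots,b_m$, the cut rule applied to $b_m$---with $A\vdash_x b_1,\dots,b_m$ on one side and the weakening $A,b_m\vdash_x b_1,\dots,b_{m-1}$ of $b_m\vdash_x b_{m-1}$ on the other---yields $A\vdash_x b_1,\dots,b_{m-1}$. The induction hypothesis gives $A-b_1,\dots,A-b_{m-1}\vdash_x 0$, and weakening adds $A-b_m$ on the left. The $\vdash_{-x}$ case is symmetric, absorbing $b_{m-1}$ into $b_m$ instead, and Proposition~\ref{main3} concludes.

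For $\Leftarrow$: $R_4$ translates $b_m\vdash_x b_{m-1}$ to $a-b_{m-1}\vdash_x a-b_m$ for every $a\in A$. For each such~$a$, the cut rule with cut element $a-b_m$ (using the given $A-b_1,\dots,A-b_m\vdash_x 0$ and a weakening of $a-b_{m-1}\vdash_x a-b_m$) removes $a-b_m$ from the left-hand side; iterating over $a\in A$ yields $A-b_1,\dots,A-b_{m-1}\vdash_x 0$. The induction hypothesis then gives $A\vdash_x b_1,\dots,b_{m-1}$, and weakening restores $b_m$ on the right; the $\vdash_{-x}$ case is symmetric. The main subtlety is the appeal to Proposition~\ref{main2}, without which the induction hypothesis would not apply inside $\vdash_x$ and $\vdash_{-x}$; the rest is routine bookkeeping with cut, weakening, and translation.
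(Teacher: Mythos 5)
Your proof is correct and is essentially the paper's own argument made explicit: the paper offers no written proof of Proposition~\ref{key}, presenting it as a ``direct application'' of the case-distinction principle drawn from Proposition~\ref{main3} (reduce to the situation where the $b_i$ are pairwise comparable under $\vdash$, whereupon one of them absorbs the others on the right and translation finishes the job), and your induction on $m$ with $x=b_{m-1}-b_m$ is precisely a careful implementation of that reduction, including the necessary observation that the induction hypothesis must be quantified over all regular entailment relations so that it applies inside $\vdash_x$ and $\vdash_{-x}$. No gaps.
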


 Thus $A\vdash B$ iff $A-B\vdash 0$ iff $0\vdash B-A$. The first equivalence is
exactly Proposition~\ref{key}, and the second equivalence follows symmetrically.

\begin{proposition}\label{cancel}
If $A+b_1,\dots,A+b_m\vdash b_j$ for $j = 1,\dots,m$, then $A\vdash 0$.
\end{proposition}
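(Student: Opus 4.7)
My plan is to induct on $m$. For $m=1$, the hypothesis $A+b_1\vdash b_1$ will yield $A\vdash 0$ directly by the translation rule~$(R_4)$ with $x=-b_1$.

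For $m\geq 2$, I will invoke Proposition~\ref{main3} with $x=b_m-b_{m-1}$, reducing the task to proving $A\vdash_x 0$ and $A\vdash_{-x} 0$. These two goals are symmetric, so I treat the first only. Because $0\vdash_x x$ holds by construction, I translate by $b_{m-1}$ to obtain $b_{m-1}\vdash_x b_m$ and hence $A+b_{m-1}\vdash_x A+b_m$. After extending each hypothesis $A+b_1,\ldots,A+b_m\vdash b_j$ to $\vdash_x$, I will cut on $A+b_m$ against the weakening $A+b_1,\ldots,A+b_{m-1}\vdash_x A+b_m$ to conclude $A+b_1,\ldots,A+b_{m-1}\vdash_x b_j$ for every $j=1,\ldots,m$. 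Restricting to the indices $j=1,\ldots,m-1$ furnishes exactly the hypothesis of the present statement at level $m-1$, for the regular entailment relation $\vdash_x$ (regular by Proposition~\ref{main2}), and the induction hypothesis will deliver $A\vdash_x 0$.

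Symmetrically, working in $\vdash_{-x}$, where $b_m\vdash_{-x} b_{m-1}$, the same cut will allow me to discard $A+b_{m-1}$ on the left and deduce $A\vdash_{-x} 0$ from the $(m-1)$-family $(b_1,\ldots,b_{m-2},b_m)$; Proposition~\ref{main3} will then give $A\vdash 0$. The main obstacle, conceptually rather than technically, is choosing the right forcing: this is the ``covering by quotients'' principle noted after Proposition~\ref{main3}, which in each of $\vdash_x$ and $\vdash_{-x}$ makes one of the elements $b_{m-1},b_m$ redundant in the meet $\bigwedge_i(A+b_i)$ and so permits the inductive reduction.
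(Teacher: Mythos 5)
Your proof is correct and takes essentially the same route as the paper, which offers no written proof but presents this proposition as a ``direct application'' of the covering-by-quotients principle derived from Proposition~\ref{main3}: your induction on $m$, forcing $x=b_m-b_{m-1}$ in each direction so that one of $A+b_{m-1}$, $A+b_m$ becomes redundant and can be cut away, is exactly a detailed implementation of the reduction to the linearly preordered case, where the minimal $b_j$ and the translation rule finish the argument.
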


 It follows from Proposition~\ref{cancel} that if we consider the monoid of formal elements
$\bigwedge A$ with the operation $\bigwedge A+\bigwedge B = \bigwedge (A+B)$, preordered by the relation
$\bigwedge A\leqslant \bigwedge B$ iff $A\vdash b$ for all $b$ in $B$, we get a \emph{cancellative}
monoid.

The \emph{Grothendieck $l$-group} of a meet-monoid $(M,+,0,\wedge)$ is the $l$-group that it freely generates. Its group structure is given by the Grothendieck group of the monoid $(M,+,0)$.

\begin{corollary}
The distributive lattice defined by the Grothendieck \(l\)-group 
of the previously defined cancellative monoid coincides with the distributive lattice defined by the relation \(\vdash\).
\end{corollary}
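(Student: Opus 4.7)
The plan is to show that the canonical map $G \to \Gamma$ (through $G \subset M \subset \Gamma$) satisfies exactly the defining relations of $L$: namely, $A \vdash B$ if and only if $\bigwedge A \leqslant \bigvee B$ in $\Gamma$. Given this, the universal characterisation of $L$ as the distributive lattice generated by the image of $G$ subject to the relations $\bigwedge A \leqslant \bigvee B$ for $A \vdash B$ will identify $L$ with the sub-lattice of $\Gamma$ generated by the image of $G$, which is the distributive lattice defined by $\Gamma$ in the intended sense.

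The key step is a chain of equivalences linking $A \vdash B$ to $\bigwedge A \leqslant \bigvee B$ in $\Gamma$. By Proposition~\ref{key}, $A \vdash B$ is equivalent to $(A - B) \vdash 0$, where $A - B = \{a - b : a \in A,\, b \in B\}$. By the definition of the preorder on the meet-monoid $M$, this is equivalent to $\bigwedge(A - B) \leqslant_M 0$. By the faithfulness of the embedding $M \hookrightarrow \Gamma$ as preordered monoids, this lifts to $\bigwedge(A - B) \leqslant_\Gamma 0$. Finally, in the $l$-group $\Gamma$, the distributivity of translation over $\wedge$ and $\vee$ yields the identity $\bigwedge_{a \in A,\, b \in B}(a - b) = \bigwedge A - \bigvee B$, so the last inequality is the same as $\bigwedge A \leqslant \bigvee B$ in $\Gamma$. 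Running this chain in both directions proves the desired iff.

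The hard part will be justifying that the cancellative meet-monoid $M$ embeds \emph{faithfully as a preordered monoid} into its Grothendieck $l$-group $\Gamma$. Cancellativity (Proposition~\ref{cancel}) ensures that $M$ embeds into its Grothendieck group as a monoid, and the translation-invariance $(R_4)$ ensures that the preorder on $M$ extends compatibly to that Grothendieck group and restricts back to the original preorder on $M$; the meet on $M$ then extends to the lattice structure of $\Gamma$ by the universal property defining the Grothendieck $l$-group. Granting this, the chain of equivalences above gives both directions of the desired iff, completing the identification of $L$ with the distributive lattice defined by $\Gamma$.
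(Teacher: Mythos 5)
Your proposal is correct and follows the route the paper intends: the paper states this corollary without proof, as an immediate consequence of Proposition~\ref{key} (reducing $A\vdash B$ to $A-B\vdash 0$, i.e.\ to $\bigwedge(A-B)\leqslant 0$ in the meet-monoid) together with the fact that a cancellative meet-monoid embeds faithfully into its Grothendieck $l$-group, where $\bigwedge(A-B)=\bigwedge A-\bigvee B$. One small correction: the faithfulness of the order-embedding rests not on translation-invariance $(R_4)$ but on \emph{order}-cancellation ($X+Y\leqslant Z+Y$ implies $X\leqslant Z$), which is exactly what Proposition~\ref{cancel} supplies and what the paper means by ``cancellative'' for a preordered monoid.
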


 We have realised in this way our goal.

 \begin{theorem} \label{thmain}
The distributive lattice \(V\) generated by a regular
entailment relation has a canonical \(l\)-group structure 
for which the natural preorder morphism \(\varphi:G\to V\)  
is a group morphism. 
 \end{theorem}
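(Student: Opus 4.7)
The plan is to read off the theorem from the machinery assembled just above the statement. The construction of the cancellative meet-monoid $M$ together with the corollary identifying the lattice of its Grothendieck $l$-group with $V$ already does almost all of the work.

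First, I would unwind the definitions: by the corollary preceding the theorem, $V$ is canonically isomorphic, as a distributive lattice, to the distributive lattice underlying the Grothendieck $l$-group of the cancellative meet-monoid $M$ of formal meets $\bigwedge A$, with addition $\bigwedge A + \bigwedge B := \bigwedge(A+B)$ and meet $\bigwedge A \wedge \bigwedge B := \bigwedge(A \cup B)$. The $l$-group structure transported along this isomorphism is what I would take as the canonical $l$-group structure on $V$.

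Next, I would check that $\varphi: G \to V$ is a group morphism. On generators, $\varphi(g) = \bigwedge\{g\}$, and in $M$ one has directly $\bigwedge\{g+h\} = \bigwedge(\{g\}+\{h\}) = \bigwedge\{g\} + \bigwedge\{h\}$ together with $\bigwedge\{0\} = 0$; the embedding $M \hookrightarrow V$, which is injective by cancellativity, preserves addition, so $\varphi(g+h) = \varphi(g) + \varphi(h)$ and $\varphi(0) = 0$. That $\varphi$ is a preorder morphism is built into the construction via $(R_3)$.

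The only genuine technical content sits upstream, in the construction of the Grothendieck $l$-group of a cancellative meet-monoid with distributive addition. The meet must be extended from $M$ to the group of formal differences by the formula $(x-y) \wedge (x'-y') := \bigl((x+y')\wedge(x'+y)\bigr) - (y+y')$, and this extension must be shown to be independent of the representatives and to satisfy the $l$-group axioms. Both verifications rely on cancellation in $M$ together with the distributivity $(u \wedge v) + w = (u+w) \wedge (v+w)$, which is itself a direct consequence of the translation rule $(R_4)$; this is the point I would identify as the main, though classical, obstacle.
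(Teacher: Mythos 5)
Your proposal matches the paper's own (implicit) argument: the paper derives Theorem~\ref{thmain} exactly from the preceding corollary, by transporting the $l$-group structure of the Grothendieck $l$-group of the cancellative meet-monoid of formal meets onto $V$, with $\varphi(g)=\bigwedge\{g\}$ additive because $\bigwedge A+\bigwedge B=\bigwedge(A+B)$. Your identification of where the real work lies (Proposition~\ref{cancel} for cancellativity and the classical extension of the meet to formal differences) is also the right reading; only the word ``injective'' for $M\to V$ is slightly off, since that map is a preorder-reflecting quotient rather than an injection, but nothing in your argument uses injectivity.
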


Note that we may have $a\vdash b$ without $a\leqslant b$, so $\varphi$
is not necessarily injective. 

 Here is another consequence of the fact 
that we can
always assume that  elements are 
linearly preordered 
for the relation $a \vdash b$.

\begin{corollary}
If \(a_1+\dots +a_n = 0\) then \(a_1,\dots,a_n\vdash 0\).
\end{corollary}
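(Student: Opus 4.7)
The plan is to exploit the principle emphasized just above the statement: to prove an entailment, one may assume that any prescribed pair of elements is comparable for~$\vdash$. I would iterate Proposition~\ref{main3} with $x = a_i$ for $i = 1, \dots, n$, reducing to sub-cases in each of which, for every~$i$, one of $a_i \vdash 0$ or $0 \vdash a_i$ is stipulated. If some $a_i \vdash 0$ holds, weakening $(R_1)$ delivers $a_1, \dots, a_n \vdash 0$ at once.

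The remaining sub-case is the one in which $0 \vdash a_i$ holds for \emph{every}~$i$. Here I would invoke compatibility of $\vdash$ with addition, obtained from translation~$(R_4)$ and cut: from $x \vdash y$ and $u \vdash v$, translating gives $x + u \vdash y + u$ and $y + u \vdash y + v$, and transitivity (a standard consequence of cut) then yields $x + u \vdash y + v$. Iterating this, from $0 \vdash a_j$ for $j = 2, \dots, n$ one obtains $0 \vdash \sum_{j=2}^{n} a_j$; but $a_1 + \dots + a_n = 0$ in $G$ forces $\sum_{j=2}^{n} a_j = -a_1$, so $0 \vdash -a_1$. Translating by $a_1$ turns this into $a_1 \vdash 0$, and weakening concludes.

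The main obstacle is simply to organise the case analysis cleanly, since the ``linearly preordered'' principle is a meta-rule that has to be applied once per element $a_i$; once it is granted as a proof technique, the remaining arithmetic in the all-positive sub-case is immediate from the addition compatibility of $\vdash$.
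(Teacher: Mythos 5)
Your proof is correct and follows exactly the route the paper intends: the corollary is stated there as a direct ``consequence of the fact that we can always assume that elements are linearly preordered for the relation $a\vdash b$'' (i.e.\ iterated use of Proposition~\ref{main3}), and your case analysis---some $a_i\vdash 0$ gives the conclusion by weakening, while $0\vdash a_i$ for all $i$ gives $0\vdash\sum_{j\geq 2}a_j=-a_1$ and hence $a_1\vdash 0$ by translation---is precisely the argument the paper leaves to the reader. The only detail worth making explicit is that the additivity of $\vdash$ you use is derived, as you say, from $(R_4)$ together with transitivity (itself a consequence of weakening and cut), so there is no gap.
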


\begin{corollary}
If \(a_1+\dots +a_n = b_1+\dots+b_n\) then \(a_1,\dots,a_n\vdash b_1,\dots,b_n\).
\end{corollary}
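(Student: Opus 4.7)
The plan is to reduce this corollary to the previous one via Proposition~\ref{key}. Recall that Proposition~\ref{key} (combined with the remark immediately after it) gives $A\vdash B$ iff $A-B\vdash 0$, where $A-B$ denotes the Minkowski-style difference $\{a-b : a\in A,\ b\in B\}$. So the target entailment $a_1,\dots,a_n\vdash b_1,\dots,b_n$ is equivalent to $\{a_i-b_j : 1\leqslant i,j\leqslant n\}\vdash 0$, and it suffices to exhibit a subfamily of the $a_i-b_j$ whose sum vanishes and invoke the previous corollary together with weakening.

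The obvious candidate is the diagonal: the elements $a_1-b_1,\dots,a_n-b_n$ sit inside $\{a_i-b_j\}$, and by hypothesis
\[
\sum_{i=1}^n (a_i-b_i)\;=\;\sum_i a_i\,-\,\sum_i b_i\;=\;0.
\]
Applying the previous corollary to this $n$-tuple yields $a_1-b_1,\dots,a_n-b_n\vdash 0$, and weakening ($R_1$) extends this to the full difference set $\{a_i-b_j\}\vdash 0$. Proposition~\ref{key} then converts back to $a_1,\dots,a_n\vdash b_1,\dots,b_n$.

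I expect no real obstacle: the whole argument is essentially a one-line manipulation once the preceding corollary and Proposition~\ref{key} are in hand. The only point to watch is that Proposition~\ref{key} really produces the full Minkowski-style difference $\{a_i-b_j\}$ rather than just the diagonal pairing, since that is precisely why the weakening step is necessary.
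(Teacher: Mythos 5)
Your proof is correct and takes essentially the same route as the paper's: both reduce the statement to the previous corollary via Proposition~\ref{key} applied to the difference set $\{a_i-b_j\}$. The only (immaterial) difference is that the paper notes directly that $\sum_{i,j}(a_i-b_j)=0$ for the full family of $n^2$ differences, whereas you single out the diagonal $a_i-b_i$ and then weaken.
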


\begin{proof}
We have $\Sigma_{i,j} (a_i-b_j) = 0$ and we can apply the previous result and Proposition~\ref{key}.
\end{proof}

\section{Another presentation of regular entailment relations}

It follows from Proposition~\ref{key} that the relation $\vdash$ is completely determined
by the predicate $A\vdash 0$ on nonempty finite subsets of the group. 
Let us analyse the properties satisfied by this predicate $R(A) = A\vdash 0$.
Firstly, it satisfies
\begin{itemize}
\item [$(P_3)$] $R(a)$ if $a\leqslant 0$ in $G$.
\end{itemize}

 Secondly, it is \emph{monotone}:
 \begin{itemize}
 \item [$(P_1)$] $R(A)$ if $R(A')$ and $A'\subseteq A$\hfill(weakening).
\end{itemize}

 The cut rule can be stated as
$R(A-B)$ if $R(A-B,x-B)$ and $R(A-B,A-x)$, so we get the following property, since we can
assume $x=0$ by translating and replace $B$ by $-B$:
\begin{itemize}
\item [$(P_2)$] $R(A+B)$ if $R(A+B,A)$ and $R(A+B,B)$\hfill(cut).
\end{itemize}

 Finally, the regularity condition gives $R(a-b,b-a,x-y,y-x)$, which simplifies, 
using $(P_1)$, into
\begin{itemize}
\item [$(P_5)$] $R(x,-x)$\hfill(regularity).
\end{itemize}

 We get in this way another presentation of a regular entailment relation as a predicate satisfying the
conditions $(P_1),(P_2),(P_3),(P_5)$: if $R$ satisfies these properties and $A\vdash B$ is defined by $R(A-B)$,
then we get a regular entailment relation (we have one axiom less since the translation property
``$A\vdash B$ if $A+x\vdash B+x$\kern1pt'' is automatically satisfied).




\section{Equivariant  systems of ideals}

 Let us make the same analysis for the notion of  \emph{equivariant system of ideals}. 
A \emph{system of ideals for a preordered set $G$} can be defined à la Lorenzen as a single-conclusion entailment relation, i.e.\ a 
relation $A\rhd x$ between nonempty finite subsets~$A$ of~$G$ and elements~$x$ of~$G$ satisfying the following conditions.
\begin{enumerate}
\item [$(S_1)$] $A\rhd x$ if $A\supseteq A'$ and $A'\rhd x$\hfill(weakening);
\item [$(S_2)$] $A\rhd x$ if $A,y\rhd x$ and $A\rhd y$\hfill(cut);
\item [$(S_3)$] $a\rhd x$ if $a\leqslant x$ in $G$.
\end{enumerate}
A system of ideals for a preordered group $G$ is said to be \emph{equivariant} when it satisfies the condition
\begin{enumerate}
\item [$(S_4)$] $A\rhd x$ if $A+y\rhd x+y$\hfill(translation).
\end{enumerate}

When we have an equivariant system of ideals, let us consider the predicate $S(A) = A\rhd 0$. This predicate satisfies the following conditions.
\begin{enumerate}
\item [$(P_1)$] $S(A)$  if $A\supseteq A'$ and $S(A')$;
\item [$(P'_2)$] $S(A)$  if $S(A,u)$ and $S(A-u)$\hfill(cut);
\item [$(P_3)$] $S(a)$ if $a\leqslant 0$ in $G$.
\end{enumerate}

 Conversely, if $S$ satisfies $(P_1)$, $(P'_2)$ and $(P_3)$ and if we define
$A\rhd x$ by $S(A-x)$, then $\rhd$ is an equivariant system of ideals, so that $S$~is just another presentation for it. 

To an equivariant system of ideals $S$ we can clearly associate the relation \hbox{$A\leqslant_S B$} given by
``$A\rhd b$ for all $b$ in $B$\,'', and we define thus a preordered monoid
with $A+B$ as monoid operation and $A\wedge B= A, B$ as meet operation.
We call the corresponding preordered monoid \emph{the meet-monoid generated by\/ $S$ on\/ $G$}.  

Conversely, consider for a preordered group $(G,\leqslant)$ any preorder $\leq$ on the monoid of finite nonempty subsets with $a\leqslant b \Rightarrow a\leq b$, the meet operation $A\wedge B$ defined as $A,B$ and the monoid operation $A+B$.
Then  we get
the equivariant system of ideals $A\rhd b =  A\leq b$.

\section{Regularisation of an equivariant system of ideals}

 Note that both notions, reformulations of regular entailment relation and of equivariant system of ideals,
are now predicates on nonempty finite subsets of~$G$. 
We say that an equivariant system of ideals is \emph{regular} if it satisfies $(P_2)$ and $(P_5)$.

 The following proposition follows from Proposition~\ref{cancel}.

\begin{proposition}\label{Prufer}
Let \(S\) be an equivariant system of ideals for a preordered group \(G\). Then 
the meet-monoid generated by \(S\) on \(G\) is cancellative if, and only if, \(S\) is regular.
\end{proposition}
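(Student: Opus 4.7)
I would prove the two implications separately: the forward direction is a direct application of Proposition~\ref{cancel}, while for the reverse direction I would combine cancellation with explicit Minkowski-sum computations.

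\textbf{Forward direction.} If $S$ is regular it satisfies $(P_1),(P_3),(P_2),(P_5)$, so by the analysis at the end of Section~2 the relation $A\vdash B := S(A-B)$ is a regular entailment relation, and its meet-monoid coincides with that of $S$ (in both, $\bigwedge A\le\bigwedge B$ unfolds to ``$A\rhd b$, equivalently $A\vdash b$, for every $b\in B$''). Given $\bigwedge A + \bigwedge C \le \bigwedge B + \bigwedge C$, i.e.\ $A+C\vdash b+c$ for all $b\in B$ and $c\in C$, I fix $b\in B$, translate by $-b$ via $(R_4)$ to obtain $(A-b)+C\vdash c$ for every $c\in C$, and apply Proposition~\ref{cancel} to conclude $A-b\vdash 0$, i.e.\ $A\vdash b$; ranging over $b\in B$ yields $\bigwedge A\le\bigwedge B$, which is cancellation.

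\textbf{Reverse direction.} Assume the meet-monoid is cancellative. For $(P_5)$, I take $A=\{x,-x\}$, $B=\{0\}$, and $C=\{0,x,-x\}$. Then $B+C=C$ while $A+C=\{0,\pm x,\pm 2x\}$ contains $C$ (since $0=x+(-x)$), so $A+C\rhd c$ for each $c\in B+C$ by $(P_3)$ and $(P_1)$, i.e.\ $\bigwedge(A+C)\le\bigwedge(B+C)$; cancellation then yields $\bigwedge A\le\bigwedge B$, which is $\{x,-x\}\rhd 0$. For $(P_2)$, set $U=A+B$ and assume $\bigwedge(U\cup A)\le 0$ and $\bigwedge(U\cup B)\le 0$. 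The plan is to find an auxiliary set $D$ such that $U+D\supseteq D$ after weakening from suitable translates of the two hypotheses by elements of $D$ (a candidate being $D=\{0\}\cup A\cup B\cup U\cup(-A)\cup(-B)$, arranged so that every $c\in D$ is reached by some translate of $U\cup A\rhd 0$ or $U\cup B\rhd 0$ lying entirely in $U+D$); then $\bigwedge(U+D)\le\bigwedge D$, and cancellation yields $\bigwedge U\le 0$.

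\textbf{Main obstacle.} I expect the $(P_2)$ step to be the hardest: closing up $D$ under the required translations and checking that each $c\in D$ has some hypothesis whose translate lies in $U+D$ requires careful combinatorial bookkeeping, and the ``gaps'' encountered (typically demanding an inclusion like $U\subseteq U+D$ or $B-a\subseteq U+D$ that fails without an appropriate element of $D$) may have to be filled in using the $(P_5)$ property just established, together with iterated cuts via $(P'_2)$, to force the missing Minkowski-sum inclusions.
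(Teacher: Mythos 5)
Your forward direction is correct and is essentially the paper's: regularity lets you pass to the regular entailment relation $A\vdash B:=S(A-B)$ from Section~2, and Proposition~\ref{cancel}, applied after translating by $-b$, gives cancellation. Your derivation of $(P_5)$ from cancellativity (with $C=\{0,x,-x\}$, so that $C\subseteq A+C$ and $B+C=C$) is also correct, and is a pleasant elementary computation.

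The gap is in the converse at $(P_2)$, which you yourself flag as the ``main obstacle'': what you give is only a plan, and the candidate $D=\{0\}\cup A\cup B\cup U\cup(-A)\cup(-B)$ does not work as stated. The cases $d\in\{0\}\cup A\cup B\cup U$ do go through by weakening (e.g.\ $U+D\supseteq U\cup A$ and $U+D-a_0\supseteq U\cup B$), but for $d=-a_0$ with $a_0\in A$ you need $U+D+a_0\rhd 0$, and the set $U+D+a_0$ contains $U$ (via $d=-a_0$) and translates such as $A+a_0$ and $B+a_0$, yet in general contains neither $A$ nor $B$; so neither hypothesis $S(U\cup A)$, $S(U\cup B)$ applies by weakening, and a translate of a hypothesis only yields $\rhd t$ for the corresponding $t$, not $\rhd 0$. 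Bridging this with cuts is exactly the hard combinatorial content of Prüfer's and Lorenzen's original arguments, and completing it would amount to reproving Theorem~\ref{thPruferRegularisation} from scratch --- which the paper deliberately avoids. The paper's converse is one line and takes a different route: a cancellative meet-monoid embeds into its Grothendieck $l$-group, which is a distributive lattice; the entailment relation induced by the composite $G\to L$ is regular (it satisfies $(P_2)$ and $(P_5)$ as any relation coming from a morphism into an $l$-group does, per the Introduction), and since the embedding reflects the preorder, $\bigwedge f(A)\leqslant_L 0$ iff $S(A)$, so the regularity transfers back to $S$. You should either invoke that embedding or substantially rework the direct construction of $D$.
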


\begin{proof}
If $S$ is regular, then $\leqslant_S$ is cancellative by Proposition~\ref{cancel}.
Conversely, if $\leqslant_S$ is cancellative, then the meet-monoid it defines
embeds into its \foreignlanguage{french}{Grothendieck} $l$-group, which is a distributive lattice.
\end{proof}

 We always have the \emph{least} equivariant  system of ideals for a preordered group~$G$: 
$\tS(A)=A\rhd_\rM 0$ iff $A$ contains an element
$\leqslant 0$ in $G$. It clearly satisfies $(P_1)$ and $(P_3)$, and it satisfies $(P'_2)$:
if $A,u\rhd_\rM0$ then either $A\rhd_\rM0$ or $u\rhd_\rM 0$, and if $u\rhd_\rM 0$ then $A\rhd_\rM u$ implies $A\rhd_\rM0$.

\smallskip Note also that equivariant systems of ideals are closed under arbitrary intersections and directed unions.

\smallskip 
Let $S$ be an equivariant system of ideals.
We define $T_x(S)$ to be the least equivariant system of ideals $Q$ containing $S$ and such that $Q(x)$.
We have $T_xT_y = T_yT_x$ and $T_x(S\cap S') = T_x(S)\cap T_x(S')$ directly from this definition.
\citet[page~516]{Lor1950} found an elegant direct description of $T_x(S)$.

\begin{proposition}\label{LorenzenTrick}
\(T_x(S)(A)\) iff there exists \(k\geqslant 0\) such that \(S(A,A-x, \dots,\allowbreak A-kx)\).
\end{proposition}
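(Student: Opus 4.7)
The plan is to take $Q(A) \Leftrightarrow \exists k\geqslant0,\ S(A, A-x, \dots, A-kx)$ and show $Q = T_x(S)$. This splits into (a) checking that $Q$ is itself an equivariant system of ideals, (b) verifying $S \subseteq Q$ together with $Q(x)$, and (c) showing $Q \subseteq R$ for every equivariant system of ideals $R \supseteq S$ with $R(x)$.

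Part (b) is immediate: $k = 0$ witnesses $S \subseteq Q$, and $k = 1$ witnesses $Q(x)$ via $S(\{x, 0\})$, itself obtained from $(P_3)$ applied to $0$ and weakening. For (a), the axioms $(P_1)$ and $(P_3)$ are routine; the content lies in cut $(P'_2)$. Given $Q(A, u)$ and $Q(A-u)$, by raising indices (weakening in $S$) I may assume a common witness $k$, yielding $S(U_k \cup W_k)$ and $S(U_k - u)$, where I write $U_k := A \cup (A-x) \cup \dots \cup (A-kx)$ and $W_k := \{u, u-x, \dots, u-kx\}$. Reading the second as $U_k \rhd u$ and translating by $-ix$ gives $U_k - ix \rhd u - ix$; because $U_k - ix \subseteq U_{2k}$ for $0\leqslant i\leqslant k$, weakening upgrades this to $U_{2k} \rhd u-ix$ for each such $i$. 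Starting from the weakened assertion $U_{2k} \cup W_k \rhd 0$, I eliminate the elements of $W_k$ one at a time by cut against these auxiliary entailments, ending at $U_{2k} \rhd 0$, i.e.\ $Q(A)$ with witness $2k$.

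For (c), I induct on $k$ to prove $R(U_k) \Rightarrow R(U_{k-1})$. Translating $R(x)$ by $a - kx$ yields $\{a-(k-1)x\} \rhd_R a - kx$ for each $a \in A$; since $a - (k-1)x \in U_{k-1}$, weakening gives $U_{k-1} \rhd_R a - kx$. Listing $A = \{a_1, \dots, a_n\}$, I start from $R(U_k)$ and successively cut out $a_1-kx, \dots, a_n-kx$ using these entailments, arriving at $R(U_{k-1})$. Iterating down to $k = 0$ delivers $R(A)$.

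The main technical point is the cut step in (a): one must arrange for all the translations $U_k - ix \rhd u - ix$ of the single given entailment $U_k \rhd u$ to land inside a common enlargement of $U_k$, so that every element of $W_k$ can be cut out against the same ambient set. Once the uniform $m = 2k$ is chosen, the rest is a mechanical iteration of cut.
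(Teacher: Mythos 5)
Your proposal is correct and follows essentially the same route as the paper: the crux in both is verifying the cut rule for the explicit formula by translating the entailment $U_k\rhd u$ by multiples of $x$ and cutting out the translated copies of $u$ one at a time inside a common enlargement ($U_{2k}$ in your version, $U_{k+l}$ in the paper's). You merely spell out the routine parts (weakening, reflexivity, minimality) that the paper leaves implicit.
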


\begin{proof}
If we have $A,A-x,\dots,A-kx\leqslant_S u$ and
$A,A-x,\dots,A-lx,u,{u-x},\dots,\allowbreak u-lx\leqslant_S v$, then we have, by $l$ cuts,
$A,A-x,\dots,A-(k+l)x\leqslant_S v$.
\end{proof}

\begin{remark}\label{rem}
Note that, in contradistinction with Lemma~\ref{conv}, we cannot simplify this condition to $S(A,A-kx)$ in general: see Examples~\ref{exa1} and~\ref{exa2}.
\end{remark}

\smallskip 
 We next define $U_x(S) = T_x(S)\cap T_{-x}(S)$. We have $U_xU_y = U_yU_x$.

\begin{lemma}
If \(S\) is an equivariant system of ideals such that \(U_x(S) = S\) for all~\(x\), then \(S\) is regular.
\end{lemma}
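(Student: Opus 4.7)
The plan is to verify the two conditions $(P_2)$ and $(P_5)$ that together define regularity. Condition $(P_5)$ is immediate: $T_x(S)(x,-x)$ follows from $T_x(S)(x)$ by weakening $(P_1)$, and symmetrically $T_{-x}(S)(x,-x)$; hence $U_x(S)(x,-x)$ holds, which by hypothesis equals $S(x,-x)$.

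For $(P_2)$ I would first note that the hypothesis propagates to every iterated $T$-system: using $T_xT_y=T_yT_x$ and the intersection identity $T_x(A\cap B)=T_x(A)\cap T_x(B)$ recorded earlier,
\[
U_y(T_x(S))=T_x(T_y(S))\cap T_x(T_{-y}(S))=T_x(U_y(S))=T_x(S),
\]
and by iteration every $T_{x_1}\cdots T_{x_n}(S)$ still satisfies $U_y=\mathrm{id}$, which licenses nested coverings.

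I would dispatch the base case $|A|=|B|=1$ first. Writing $A=\{a\}$, $B=\{b\}$, cover $S$ by $U_aU_b$: it suffices to verify $T_{\varepsilon a}T_{\delta b}(S)(a+b)$ for each sign pattern $(\varepsilon,\delta)\in\{+,-\}^2$. In each of the four sub-systems the two elements $\varepsilon a$, $\delta b$ are $\rhd 0$, and at most two applications of $(P'_2)$ with $z$ chosen in $\{a,b,a+b\}$ turn the hypotheses $S(a+b,a)$, $S(a+b,b)$ into the conclusion; the only nontrivial case is $T_{-a}T_{-b}(S)$, where $(P'_2)$ with $z=a$ reduces the goal to $T_{-a}T_{-b}(S)(b)$, and a further $(P'_2)$ with $z=a+b$ reduces $T_{-a}T_{-b}(S)(b)$ to $T_{-a}T_{-b}(S)(-a)$, valid by definition.

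For general $(A,B)$, I would cover $S$ further by $U_{a_i-a_j}$ and $U_{b_i-b_j}$ over all pairs, so that in each resulting sub-system $S''$ the preorder on $A$ and on $B$ is total with respective minima $a_0$, $b_0$, i.e.\ $S''(a_0-a)$ holds for every $a\in A$ and $S''(b_0-b)$ for every $b\in B$. A single $(P'_2)$ with $z=a_0-a_i$ then yields $S''((a_0-a_i)+(b_0-b_j))$ for all $i,j$. Keeping $\{a_0,a_0+b_0\}$ (resp.\ $\{b_0,a_0+b_0\}$) permanently in the set, one successively strips the unwanted elements from $(A+B)\cup A$ (resp.\ $(A+B)\cup B$) by $(P'_2)$: the shifted-set premise of each strip is supplied by $(P_1)$-weakening from $S''((a_0-a_i)+(b_0-b_j))$ or $S''(a_0-a_i)$. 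This yields $S''(a_0+b_0,a_0)$ and $S''(a_0+b_0,b_0)$; applying the base case inside $S''$ delivers $S''(a_0+b_0)$, whence $S''(A+B)$ by $(P_1)$, and combining over the covering gives $S(A+B)$. The main obstacle is this element-by-element stripping in the general case: the naive restriction from $A$ to $\{a_0\}$ fails because $(P_1)$ only transmits $S$ from smaller to larger sets, so the order facts must be threaded through $(P'_2)$-cuts one at a time while keeping $a_0$ and $a_0+b_0$ permanently in the set so that $(P_1)$-weakening always supplies the needed shifted-set premise.
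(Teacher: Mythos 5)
Your proof is correct, but it follows a genuinely different route from the paper's. Both treatments of $(P_5)$ coincide. For $(P_2)$, the paper covers $S$ only by $U_{a_1}\cdots U_{a_n}$ with $a_1,\dots,a_n$ the elements of $A$, and then observes that every branch $T_{\epsilon_1 a_1}\cdots T_{\epsilon_n a_n}(S)$ either contains some $T_{a_i}(S)$ --- where forcing $a_i\leqslant 0$ gives $\bigwedge(A+B)\leqslant\bigwedge B$, so the first hypothesis already yields $\bigwedge(A+B)\leqslant 0$ --- or is the all-minus branch $T_{-a_1}\cdots T_{-a_n}(S)$, where forcing $0\leqslant a_i$ for all $i$ gives $\bigwedge B\leqslant\bigwedge(A+B)$, hence $\bigwedge B\leqslant 0$, hence $\bigwedge(A+B)\leqslant\bigwedge A$, and the second hypothesis concludes; all manipulations are carried out in the meet-monoid $\leqslant_S$, which packages the $(P'_2)$-cuts. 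You instead force signs of all the differences $a_i-a_j$ and $b_i-b_j$ so as to linearise $A$ and $B$ in each branch, extract minima $a_0$, $b_0$, strip the sets down to $\{a_0+b_0,a_0\}$ and $\{a_0+b_0,b_0\}$ element by element, and finish with a separately proved singleton case covered by $U_aU_b$; this requires the additional (correct) observation that $U_y(T_x(S))=T_x(S)$, so that the hypothesis persists under iterated forcing and nested coverings are licensed. Your argument is a faithful implementation of the ``reduce to the linearly preordered case'' principle that the paper invokes elsewhere (after Proposition~\ref{main3}), and it is fully explicit at the level of the predicate $S$; the price is a much larger covering (exponential in the number of pairs rather than in $|A|$) and the stripping bookkeeping, whereas the paper's proof exploits the asymmetry between the two hypotheses to get by with signs of the elements of $A$ alone. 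Your steps all check out: the four sign cases of the base case, the transitivity needed to produce $a_0$ and $b_0$, and the weakening that supplies each shifted-set premise are all valid.
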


\begin{proof}

 We show that conditions $(P_5)$ and $(P_2)$ hold.

\smallskip 

 We have $S(x,-x)$ since we have both $T_x(S)(x,-x)$ and $T_{-x}(S)(x,-x)$. This shows $(P_5)$.

\smallskip 

 Let us show $(P_2)$. We assume $\bigwedge (A+B)\wedge \bigwedge B\leqslant_S 0$
 and $\bigwedge (A+B)\wedge \bigwedge A\leqslant_S 0$, 
and we show $\bigwedge (A+B)\leqslant_S 0$. 

Note that we have $T_{a}(S)(A+B)$ for any $a$ in $A$ 
by monotonicity: forcing $a\leqslant_S 0$, we have ${\bigwedge (A+B)}\leqslant_{T_a(S)} \bigwedge B$,
and so $\bigwedge (A+B)\leqslant_{T_a(S)} 0$ follows from  $\bigwedge (A+B)\wedge \bigwedge B\leqslant_{T_a(S)} 0$.

Let $T$ be the composition of all the $T_{-a}$ with $a$ in~$A$: we force $0\leqslant_S a$ for all $a$
in $A$.  We have $\bigwedge B\leqslant_{T(S)} \bigwedge (A+B)$, and so 
$\bigwedge B\leqslant_{T(S)} 0$ follows from  $\bigwedge (A+B)\wedge \bigwedge B\leqslant_{T(S)} 0$.
This implies $\bigwedge (A+B)\leqslant_{T(S)} \bigwedge A$, and so $\bigwedge (A+B)\leqslant_{T(S)} 0$
follows from $\bigwedge (A+B)\wedge \bigwedge A\leqslant_{T(S)} 0$.

Together, these two facts prove, for the composition~$U$ of all the~$U_a$ with $a$ in~$A$, that
$\bigwedge(A+B)\leqslant_{U(S)}0$. Since $U(S)=S$, we get $\bigwedge (A+B)\leqslant_S 0$, as desired.
\end{proof}

Let us define $L(S)$ as the (directed) union of the $U_{x_1}\cdots U_{x_n}(S)$,
as \citet[\S 2 and p.\ 23]{Lor1953} did. We get the following theorem.

\begin{theorem} \label{thLorGroup}
\(L(S)\) is the least regular system containing \(S\); in other
words, it is the \emph{regularisation} of \(S\). The \(l\)-group granted by Theorem~\ref{thmain} for \(L(S)\) is called the \emph{Lorenzen \(l\)-group} associated to the equivariant system of ideals \(S\). 
\end{theorem}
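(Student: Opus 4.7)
My plan is to establish three assertions: (a) $L(S)$ is an equivariant system of ideals containing $S$; (b) $L(S)$ is regular; and (c) every regular equivariant system of ideals containing $S$ contains $L(S)$. For (a), each finite composite $U_{x_1}\cdots U_{x_n}(S)$ is an equivariant system (since $T_x$ and pointwise intersection preserve equivariance), and the family is directed because $U_xU_y=U_yU_x$ together with extensivity $S\subseteq U_x(S)$ (immediate from $S\subseteq T_x(S)$) imply that $U_{\vec x}(S)$ and $U_{\vec y}(S)$ both sit inside $U_{\vec x}U_{\vec y}(S)$; equivariance transfers to the directed union, and $S\subseteq L(S)$ is trivial.

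For (b), by the lemma preceding the theorem it suffices to show $U_y(L(S))=L(S)$ for every~$y$. The nontrivial inclusion is a filtered-colimit chase: if $A\in U_y(L(S))=T_y(L(S))\cap T_{-y}(L(S))$, then Proposition~\ref{LorenzenTrick} produces integers $k,l$ such that the finite sets $A\cup(A-y)\cup\dots\cup(A-ky)$ and $A\cup(A+y)\cup\dots\cup(A+ly)$ both lie in $L(S)$; each such membership is witnessed by a single composite $U_{\vec x}(S)$ or $U_{\vec y}(S)$, and the common refinement $Q:=U_{\vec x}U_{\vec y}(S)$ in the directed family validates both, so $A\in T_y(Q)\cap T_{-y}(Q)=U_y(Q)\subseteq L(S)$. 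The reverse inclusion is just extensivity of~$U_y$.

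For (c), let $R$ be a regular equivariant system of ideals with $R\supseteq S$. I claim $U_x(R)=R$ for every~$x$; granting this, monotonicity of the $U_x$ and induction on $n$ give $U_{x_1}\cdots U_{x_n}(S)\subseteq U_{x_1}\cdots U_{x_n}(R)=R$ for every list, whence $L(S)\subseteq R$. To prove the claim I would pass through the Section~2 dictionary between regular systems~$R$ and regular entailment relations via $R(A)=A\vdash 0$: Lemma~\ref{conv} combined with successive cuts collapses the chain $R(A,A-x,\dots,A-kx)$ appearing in Proposition~\ref{LorenzenTrick} to the single condition $R(A,A-kx)$, which identifies $T_x(R)(A)$ with $A\vdash_{-x}0$ and, symmetrically, $T_{-x}(R)(A)$ with $A\vdash_x 0$; Proposition~\ref{main3} then yields $A\vdash 0$, i.e., $R(A)$.

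The main obstacle I anticipate is this dictionary step in (c): one must carefully justify that in a regular system the chain $R(A,A-x,\dots,A-kx)$ actually collapses to $R(A,A-kx)$, by iterating Lemma~\ref{conv} through cuts to peel off the intermediate translates, while tracking the sign convention by which forcing $Q(x)$ in an equivariant system corresponds to $x\vdash 0$ and hence pairs $T_x$ with the construction $\vdash_{-x}$ of Proposition~\ref{main2}.
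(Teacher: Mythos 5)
Your proof is correct and follows exactly the route the paper sets up: the theorem is stated there without an explicit proof, the intended argument being the preceding lemma (a system fixed by all $U_x$ is regular) plus directedness of the union for parts (a)--(b), and the converse fixed-point property of regular systems --- via Proposition~\ref{LorenzenTrick}, the collapse of the chain by Lemma~\ref{conv}, and Proposition~\ref{main3} --- for minimality. Your three steps supply precisely these omitted details, with the sign bookkeeping pairing $T_x$ with $\vdash_{-x}$ handled correctly.
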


\section{A constructive version of the Lorenzen-Clifford-\foreignlanguage{french}{Dieudonné} Theorem}

 In particular, we can start from the least equivariant system of ideals
 for a given preordered group $G$.
In this case, we have $L(\tS)(A)$ 
iff there exist $x_1,\dots,x_n$ such that for any choice $\epsilon_1,\dots,\epsilon_n$ of signs $\pm1$
we can find $k_1,\dots,k_n\geqslant 0$ and $a$ in $A$ such that $a+\epsilon_1 k_1 x_1+\cdots+\epsilon_n k_n x_n \leqslant 0$.
We clearly have by elimination: if $L(\tS)(a)$, then $na\leqslant 0$ for some $n>0$. We can
then deduce from this a constructive version of the Lorenzen-Clifford-Dieudonné Theorem.

\begin{theorem}
For any commutative preordered group \(G\), we can build an \(l\)-group \(L\) and a map \(f:G\rightarrow L\)
such that \(f(a)\geqslant 0\) iff there exists \(n>0\) such that \(na\geqslant 0\).
More generally, we have \(f(a_1)\vee\dots \vee f(a_k)\geqslant 0\) iff there
exist \(n_1,\dots,n_k\geqslant 0\) such that \(n_1a_1+\cdots+n_ka_k\geqslant 0\) and
\(n_1+\cdots+n_k>0\).
\end{theorem}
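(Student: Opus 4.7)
The plan is to set $L := L(\tS)$, the Lorenzen $l$-group of the least equivariant system of ideals $\tS$ on $G$ (Theorem~\ref{thLorGroup}), and to let $f := \varphi : G \to L$ be the canonical group morphism provided by Theorem~\ref{thmain}. Via the identification $A \vdash B \Leftrightarrow \bigwedge f(A) \leqslant \bigvee f(B)$ and Proposition~\ref{key}, the inequality $f(a_1)\vee\dots\vee f(a_k)\geqslant 0$ is equivalent to $L(\tS)(-a_1,\dots,-a_k)$. By the explicit description of $L(\tS)$ recalled just before the theorem, this is equivalent to the existence of elements $x_1,\dots,x_n \in G$ such that for every sign vector $\epsilon \in \{\pm 1\}^n$ there are integers $k_1^\epsilon,\dots,k_n^\epsilon\geqslant 0$ and an index $i(\epsilon)\in\{1,\dots,k\}$ with
\[
-a_{i(\epsilon)}+\epsilon_1 k_1^\epsilon x_1 + \dots + \epsilon_n k_n^\epsilon x_n \leqslant 0.
\]

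The direction ``$\Leftarrow$'' follows from torsion-freeness of $l$-groups: if $\sum_j n_j a_j \geqslant 0$ in $G$ with $N:=\sum_j n_j > 0$, then $\sum_j n_j f(a_j) \geqslant 0$ in $L$, and each bound $f(a_j)\leqslant\bigvee_i f(a_i)$ gives $0 \leqslant N \bigvee_i f(a_i)$, whence $\bigvee_i f(a_i) \geqslant 0$.

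For ``$\Rightarrow$'' I would eliminate the $x_j$'s one at a time by pairing opposite signs. Fix $\epsilon' \in \{\pm 1\}^{n-1}$ and consider the two inequalities obtained by extending with $\epsilon_n = \pm 1$. If both coefficients $k_n^{(\epsilon',+)}$ and $k_n^{(\epsilon',-)}$ are strictly positive, add $k_n^{(\epsilon',-)}$ copies of the first to $k_n^{(\epsilon',+)}$ copies of the second: the $x_n$ terms cancel, and the result is an inequality
\[
c^{\epsilon'} + \sum_{j<n}\epsilon'_j m_j^{\epsilon'} x_j \leqslant 0
\]
where $c^{\epsilon'}$ is a nonnegative integer combination of $-a_1,\dots,-a_k$ with coefficient sum $k_n^{(\epsilon',+)}+k_n^{(\epsilon',-)}>0$. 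If instead one of $k_n^{(\epsilon',\pm)}$ vanishes, the corresponding inequality already omits $x_n$ and I simply keep it, still with a strictly positive coefficient sum. Iterating this step $n$ times maintains the invariant that each surviving inequality has the above form with strictly positive coefficient sum, and after all the $x_j$'s are gone I am left with a single inequality $-(n_1 a_1+\dots+n_k a_k) \leqslant 0$ with $n_1+\dots+n_k > 0$, yielding $n_1 a_1+\dots+n_k a_k \geqslant 0$ as required. The first claim of the theorem is the case $k=1$ of this.

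The main obstacle is the combinatorial bookkeeping in the elimination: the naive ``pair opposite signs and combine'' step produces a trivial inequality precisely when both $x_n$-coefficients vanish, so one must treat that degenerate case separately (by retaining one of the two original inequalities) in order to guarantee that the nonnegative combination of $-a_i$'s produced at each stage has strictly positive coefficient sum, which is what allows the induction on $n$ to go through and yields the nontriviality condition $n_1+\cdots+n_k>0$ in the conclusion.
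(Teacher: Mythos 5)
Your proposal is correct and follows exactly the route the paper intends: the paper itself only sketches this argument in the sentences preceding the theorem (``We clearly have by elimination: if $L(\tS)(a)$, then $na\leqslant 0$ for some $n>0$''), and your sign-pairing elimination, with the careful handling of vanishing coefficients to preserve the positive coefficient sum, supplies precisely the details left implicit there. The converse direction via $Nv\geqslant 0\Rightarrow v\geqslant 0$ in an $l$-group is also the standard argument.
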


Note that this $l$-group $L$ is the $l$-group freely generated by the preordered group~$G$.

\section{Prüfer's definition of the regularisation}

\citet{Pru1932} found the following direct definition of the regularisation, which follows
 at once from Proposition~\ref{Prufer}. 

\begin{theorem}\label{thPruferRegularisation}
The regularisation \(R\) of an equivariant system of ideals \(S\) can be defined by \(R(A)\) holding iff there exists \(B\) such that \(A+B\leqslant_S B\).
\end{theorem}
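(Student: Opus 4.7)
The plan is to prove both implications of the equivalence.

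For the easier direction, I would observe that if $A + B \leqslant_S B$ for some~$B$, then since $S$ is contained in its regularisation~$R$, also $A + B \leqslant_R B$, which unfolds as $(A+b_1) \cup \dots \cup (A+b_m) \vdash_R b_j$ for each $b_j \in B = \{b_1, \dots, b_m\}$. Proposition~\ref{cancel}, applied within the regular entailment relation associated to~$R$, then yields $A \vdash_R 0$, that is $R(A)$.

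For the converse, let $R''(A)$ denote the predicate ``there exists~$B$ with $A + B \leqslant_S B$''. Since $R = L(S) = \bigcup_n U_{x_1} \cdots U_{x_n}(S)$, I would prove by induction on~$n$ that each $U_{x_1} \cdots U_{x_n}(S) \subseteq R''$; the base case is immediate with $B = \{0\}$. The inductive step, unfolding $U_x = T_x \cap T_{-x}$ via Proposition~\ref{LorenzenTrick}, reduces to the following closure property: if $R''(A + C)$ and $R''(A + C')$ for $C = \{0, -x, \dots, -kx\}$ and $C' = \{0, x, \dots, kx\}$, then $R''(A)$.

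To establish this closure, I would take witnesses $B_1, B_2$ satisfying $(A + C) + B_1 \leqslant_S B_1$ and $(A + C') + B_2 \leqslant_S B_2$, and set $B = (C \cup C') + B_1 + B_2$. The crucial identity is $C + C' = \{mx : -k \leqslant m \leqslant k\} = C \cup C'$, which gives $A + B = A + (C + C') + B_1 + B_2$. Adding $C' + B_2$ to the first inequality (using that $X \leqslant_S Y$ implies $X + Z \leqslant_S Y + Z$, which follows from translation and weakening) yields $A + B \leqslant_S C' + B_1 + B_2$; symmetrically, adding $C + B_1$ to the second yields $A + B \leqslant_S C + B_1 + B_2$. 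Since $B = (C' + B_1 + B_2) \cup (C + B_1 + B_2)$, these combine to $A + B \leqslant_S B$, i.e., $R''(A)$.

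The main obstacle will be locating the right witness~$B$ in the inductive step. The identity $C + C' = C \cup C'$ is precisely what makes the two translated inequalities fit together into $A + B \leqslant_S B$; everything else reduces to routine manipulations with translation and weakening in~$S$.
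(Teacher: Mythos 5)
Your proof is correct, but for the hard direction it takes a genuinely different route from the paper's. The paper derives the theorem ``at once'' from Proposition~\ref{Prufer}: one direction is, as in your argument, cancellation in the regularisation (Proposition~\ref{cancel}); for the other, the intended argument is that the Prüfer predicate $R''$ is itself a regular equivariant system of ideals containing $S$ --- its meet-monoid being cancellative essentially by construction --- so that it contains $L(S)$ by the minimality granted in Theorem~\ref{thLorGroup}. You instead prove $L(S)\subseteq R''$ by induction over the generating construction $L(S)=\bigcup U_{x_1}\cdots U_{x_n}(S)$, using Proposition~\ref{LorenzenTrick} and the identity $C+C'=C\cup C'$ for $C=\{0,-x,\dots,-kx\}$ and $C'=\{0,x,\dots,kx\}$ to assemble the Prüfer witness $(C\cup C')+B_1+B_2$ from the witnesses of the two forcings; I checked the two translated inequalities and they do combine as you claim, since $B=(C+B_1+B_2)\cup(C'+B_1+B_2)$ and $\leqslant_S$ into a union is the conjunction of the two comparisons. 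Your route is more work but more informative: it extracts an explicit witness $B$ from a certificate of membership in $L(S)$, in the constructive spirit of the paper, and it sidesteps the fiddly verification that $R''$ satisfies the cut rules $(P'_2)$ and $(P_2)$. Two small points you leave implicit and should state: the exponents $k$ coming from $T_x$ and $T_{-x}$ can be taken equal by monotonicity $(P_1)$ of the system $U_{x_2}\cdots U_{x_n}(S)$; and Proposition~\ref{cancel} applies to $R=L(S)$ because a regular system of ideals induces a regular entailment relation via $X\vdash Y:=R(X-Y)$, as explained in Section~2.
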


This gives another proof that if we have $L(\tS)(a)$ then $na\leqslant 0$ for some $n>0$:
if we have $B$ such that $a+ B\leqslant_{\tS}  B$ then we can find
a cycle $a+b_2\leqslant b_1$, \dots, $a+b_1\leqslant b_n $ with $b_1,\dots,b_n\in B$, and then $na\leqslant 0$.

\section{The \texorpdfstring{\textit{l\/}}{l}-group structure in the noncommutative case}

 If $G$ is a not necessarily commutative preordered group, we use a multiplicative
notation and we define a \emph{regular entailment relation} by the following conditions.
\begin{itemize}
\item [$(R_1)$] $A\vdash B$ if $A\supseteq A'$ and $B\supseteq B'$ and $A'\vdash B'$\hfill(weakening);
\item [$(R_2)$] $A\vdash B$ if $A,x\vdash B$ and $A\vdash B,x$\hfill(cut);
\item [$(R_3)$] $a\vdash b$ if $a\leqslant b$ in $G$;
\item [$(R_4)$] $A\vdash B$ if $xAy\vdash xBy$\hfill(translation);
\item [$(R_5)$] $xa,by\vdash xb,ay$\hfill(regularity).
\end{itemize}

Note that $(R_5)$ is satisfied in linearly preordered groups: if $a\leqslant b$, then $xa\wedge by\leqslant xa\leqslant xb\leqslant xb\vee ay$, and if $b\leqslant a$, then $xa\wedge by\leqslant by\leqslant ay\leqslant xb\vee ay$.

Let $\vdash$ be a regular entailment relation and let $(V,\leqslant_V)$ be
the corresponding distributive lattice; then $(R_4)$ shows that we
have a left and right action of $G$ on~$\leqslant_V$.

We define $\leqslant_{a,b}$ to be the lattice preorder with left and
right action of $G$ on it obtained from~$\leqslant_V$ by forcing
$b\leqslant_{a,b} a$.

 We define $u\leqslant^{a,b} v$ by ``$xa\wedge uy\leqslant_Vxb\vee vy$ for all $x$ and $y$
in $G$\,''.

\begin{lemma}
We have \(xa\wedge by\leqslant_Vxb\vee ay\) for all \(a\) and \(b\) in \(V\) and
all \(x\) and \(y\) in~\(G\).
\end{lemma}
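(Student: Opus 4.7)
The plan is to transfer $(R_5)$ from $G$ to $V$ and then propagate the resulting inequality to arbitrary elements of $V$ by a straightforward induction on the lattice structure. The base case is immediate: for $a,b$ in (the image of) $G$, the entailment $xa,by\vdash xb,ay$ furnished by $(R_5)$ reads, by the construction of~$V$, as $xa\wedge by\leqslant_V xb\vee ay$. The translation rule $(R_4)$ is bi-implicative (its converse follows by conjugation with $x^{-1}$ and $y^{-1}$), so the maps $u\mapsto xu$ and $u\mapsto uy$ are automorphisms of the preorder $(V,\leqslant_V)$; in particular they preserve all finite meets and joins.

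For the inductive step, fix $x,y\in G$ and $b\in V$, and assume the inequality holds for $(a_1,b)$ and $(a_2,b)$. With $a=a_1\vee a_2$, distributivity of~$V$ gives
\[
xa\wedge by=(xa_1\wedge by)\vee(xa_2\wedge by)\leqslant_V(xb\vee a_1y)\vee(xb\vee a_2y)=xb\vee ay,
\]
and with $a=a_1\wedge a_2$,
\[
xa\wedge by=(xa_1\wedge by)\wedge(xa_2\wedge by)\leqslant_V(xb\vee a_1y)\wedge(xb\vee a_2y)=xb\vee(a_1y\wedge a_2y)=xb\vee ay,
\]
the penultimate equality being distributivity applied to $xb\vee(\,\cdot\,)$. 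The same computation, carried out in the second argument, shows that for each fixed~$a$ the set of admissible $b$ is likewise closed under $\wedge$ and $\vee$.

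Since every element of~$V$ is obtained from the image of~$G$ by finitely many applications of $\wedge$ and $\vee$, extending first in~$a$ and then in~$b$ settles the inequality on all of~$V\times V$. The only non-mechanical point is the preservation of lattice operations by the left and right actions of~$G$, which is exactly the two-sided content of $(R_4)$; past that, the argument is pure distributivity and requires nothing beyond the already-established base case $(R_5)$.
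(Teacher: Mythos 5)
Your proof is correct and follows essentially the same route as the paper's: establish the base case from $(R_5)$ and propagate to all of $V$ by closing under $\wedge$ and $\vee$ in each argument via distributivity. The paper's proof is just a terser version of yours; your added remarks on the two-sidedness of $(R_4)$ and the actions preserving lattice operations make explicit what the paper leaves implicit.
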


\begin{proof} This holds for $a$ and $b$ in $G$.
Then, if we have 
 $xa_1\wedge by\leqslant_Vxb\vee a_1y$
and 
 $xa_2\wedge by\leqslant_Vxb\vee a_2y$,
 we get
 $xa\wedge by\leqslant_Vxb\vee ay$
for $a = a_1\wedge a_2$ and for $a = a_1\vee a_2$.
\end{proof}

\begin{proposition}[see {\citealt[Satz 3]{Lor1952}}]
  The preorder \(\leqslant^{a,b}\) defines a lattice quotient of \(V\)
with left and right action of \(G\) on it such that \(b\leqslant^{a,b} a\)
if \(a\) and \(b\) are in \(G\).
\end{proposition}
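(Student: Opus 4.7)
The plan is to verify in order that $b\leqslant^{a,b}a$ holds when $a,b\in G$, that $\leqslant_V$ is contained in $\leqslant^{a,b}$, that $\leqslant^{a,b}$ is a preorder, and that it is compatible with $\wedge$, $\vee$, and the left and right actions of $G$ on $V$. The first claim is immediate from the preceding lemma: its conclusion $xa\wedge by\leqslant_V xb\vee ay$ is precisely the defining condition of $b\leqslant^{a,b}a$. Inclusion of $\leqslant_V$ (and in particular reflexivity) is a one-line chain: if $u\leqslant_V v$, then $xa\wedge uy\leqslant_V uy\leqslant_V vy\leqslant_V xb\vee vy$.

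The key step is transitivity. Assuming $xa\wedge uy\leqslant_V xb\vee vy$ and $xa\wedge vy\leqslant_V xb\vee wy$ for all $x,y$, I take the meet of the first inequality with $xa$ and invoke distributivity of $V$:
\[
xa\wedge uy\leqslant_V xa\wedge(xb\vee vy)=(xa\wedge xb)\vee(xa\wedge vy).
\]
The first disjunct is $\leqslant_V xb$ and the second is $\leqslant_V xb\vee wy$ by the second hypothesis, so the right-hand side is $\leqslant_V xb\vee wy$, yielding $u\leqslant^{a,b}w$. This distributivity trick is the essential ingredient and is where I expect most of the work to sit.

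Compatibility with $\wedge$ and $\vee$ uses the same style of calculation. Given $u\leqslant^{a,b}v$ and $w\in V$, one has $xa\wedge uy\wedge wy\leqslant_V(xb\vee vy)\wedge wy\leqslant_V xb\vee(vy\wedge wy)$, hence $u\wedge w\leqslant^{a,b}v\wedge w$; dually $xa\wedge(uy\vee wy)=(xa\wedge uy)\vee(xa\wedge wy)\leqslant_V xb\vee vy\vee wy$, hence $u\vee w\leqslant^{a,b}v\vee w$. Combined with transitivity, these yield the full congruence property $u_i\leqslant^{a,b}v_i\Rightarrow u_1\wedge u_2\leqslant^{a,b}v_1\wedge v_2$ and the join analogue, so $\leqslant^{a,b}$ descends to a lattice structure on a quotient of $V$ into which the canonical map from $V$ is a lattice homomorphism.

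Finally, by $(R_4)$, left and right multiplication by any $g\in G$ acts as a lattice automorphism of $V$. Right-invariance is immediate: substituting $y'=gy$ in the defining inequality and using $(ug)y=u(gy)$ gives $xa\wedge(ug)y\leqslant_V xb\vee(vg)y$. For left-invariance, substituting $x'=g^{-1}x$ in the hypothesis and applying the automorphism of left multiplication by $g$ yields $xa\wedge(gu)y\leqslant_V xb\vee(gv)y$, using the identities $g(x'a)=(gx')a=xa$, $g(x'b)=xb$, and $g(uy)=(gu)y$ (the last expressing the commutation of the two actions). This completes the verification that $\leqslant^{a,b}$ defines a $G$-biequivariant lattice quotient of $V$ in which $b\leqslant^{a,b}a$ for $a,b\in G$.
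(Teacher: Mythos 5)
Your proof is correct and follows essentially the same route as the paper's: the paper verifies $b\leqslant^{a,b}a$ via the preceding lemma, gets transitivity by the cut rule (which is exactly your distributivity computation $xa\wedge(xb\vee vy)=(xa\wedge xb)\vee(xa\wedge vy)$ in disguise), checks reflexivity the same way, and proves bi-invariance by the same substitution $x\mapsto z^{-1}x$, $y\mapsto ty$. Your explicit verification that $\leqslant^{a,b}$ is a lattice congruence (compatibility with $\wedge$ and $\vee$) is a detail the paper leaves implicit, and it is carried out correctly.
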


\begin{proof}
We have  $b\leqslant^{a,b} a$, since $xa\wedge by\leqslant_Vxb\vee ay$ for all $x$ and $y$
by the previous Lemma.

If we have  $u\leqslant^{a,b} v$ and  $v\leqslant^{a,b} w$, then
 $xa\wedge uy\leqslant_Vxb\vee vy$ and
 $xa\wedge vy\leqslant_Vxb\vee wy$ for all $x$ and $y$.
By cut, we get $xa\wedge uy\leqslant_Vxb\vee wy$ for all $x$ and $y$,
that is  $u\leqslant^{a,b} w$. This shows that the relation $\leqslant^{a,b}$ is transitive.
This relation is also reflexive, since $xa\wedge uy\leqslant_Vuy\leqslant_Vxb\vee uy$ for all $x$ and $y$ in $G$.

 Finally, if we have $u\leqslant^{a,b} v$, that is
 $xa\wedge uy\leqslant_Vxb\vee vy$ for all $x$ and $y$ in $G$, then we also
have  $zut\leqslant^{a,b} zvt$, that is
 $xa\wedge zuty\leqslant_Vxb\vee zvty$ for all $x$ and $y$ in $G$, since we have
 $z^{-1}xa\wedge uty\leqslant_Vz^{-1}xb\vee vty$ for all $x$ and $y$ in $G$.
\end{proof}

 By definition, $u\leqslant_{a,b} v$ implies $u\leqslant^{a,b} v$, since $\leqslant_{a,b}$ is
the \emph{least} invariant preorder relation forcing $a\leqslant_{a,b} b$.

 Also by definition, note that we have $u\leqslant^{a,b} v$ iff $a\leqslant^{u,v} b$,
since $xa\wedge uy\leqslant_Vxb\vee vy$ is equivalent to $x^{-1}u\wedge ay^{-1}\leqslant_Vx^{-1}v\vee by^{-1}$.

\begin{proposition}\label{main2nc}
\(u\leqslant_{a,b} v\) and \(u\leqslant_{b,a} v\) imply
\(u\leqslant_Vv\).
\end{proposition}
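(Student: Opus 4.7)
The plan is to exhibit \(\leqslant^{u,v}\) as a lattice preorder on \(V\) with left and right \(G\)-action, containing \(\leqslant_V\) and forcing \(b\leqslant a\); the universal property defining \(\leqslant_{a,b}\) as the least such preorder will then give \(\leqslant_{a,b}\subseteq \leqslant^{u,v}\), and the first hypothesis will unfold to \(u\leqslant_V v\).

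First, from \(u\leqslant_{b,a}v\) I would extract \(u\leqslant^{b,a}v\), using the containment \(\leqslant_{b,a}\subseteq \leqslant^{b,a}\) recorded just before this Proposition. Applying the symmetry \(u\leqslant^{b,a}v \iff b\leqslant^{u,v} a\) noted in the same place then yields \(b\leqslant^{u,v} a\).

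Next, I would invoke the preceding Proposition with \(u, v\) playing the role of \(a, b\): this shows that \(\leqslant^{u,v}\) is a lattice preorder on \(V\) with left and right \(G\)-action. A direct check gives \(\leqslant_V \subseteq \leqslant^{u,v}\), since \(p\leqslant_V q\) implies \(xu\wedge py \leqslant_V py \leqslant_V qy \leqslant_V xv\vee qy\) for all \(x, y\in G\). Combined with \(b\leqslant^{u,v}a\), this makes \(\leqslant^{u,v}\) a valid candidate in the universal property defining \(\leqslant_{a,b}\), so \(\leqslant_{a,b}\subseteq \leqslant^{u,v}\).

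Finally, the hypothesis \(u\leqslant_{a,b}v\) then gives \(u\leqslant^{u,v}v\); specialising the defining inequality \(xu\wedge uy \leqslant_V xv\vee vy\) to \(x=y=1\) yields \(u\leqslant_V v\). The only delicate step will be verifying that the preceding Proposition really applies with parameters in \(V\) rather than only in \(G\); but its proof rests solely on the Lemma \(xa\wedge by\leqslant_V xb\vee ay\) for \(a, b\in V\), so this extension is immediate.
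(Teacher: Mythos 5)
Your proof is correct and is essentially the paper's own argument with the roles of the two hypotheses (equivalently, of \(a\) and \(b\)) interchanged, which is immaterial since the statement is symmetric in \(a\) and \(b\): both proofs use one hypothesis to show via \(\leqslant^{\,\cdot,\cdot}\)-symmetry that \(\leqslant^{u,v}\) is a quotient forcing the relevant inequality between \(a\) and \(b\), then apply the minimality of the other forced quotient to conclude \(u\leqslant^{u,v}v\) and specialise to \(x=y=1\). Your explicit checks that \(\leqslant_V\subseteq\leqslant^{u,v}\) and that the quotient proposition extends to parameters in \(V\) are welcome details that the paper leaves implicit.
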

\begin{proof}
In fact, $u\leqslant_{a,b} v$ implies $u\leqslant^{a,b} v$,
which implies $a\leqslant^{u,v} b$. But $u\leqslant_{b,a} v$ implies that $u$~is less than or equal to~$v$ in any lattice quotient in which $a$~is less than or equal to~$b$; therefore $u\leqslant^{u,v} v$. So $xu\wedge uy\leqslant_Vxv\vee vy$ for all $x,y$.
In particular, for $x=y=1$, we have $u\leqslant_Vv$.
\end{proof}

It follows from this that $V$ admits a group structure which extends
the one on $G$. In fact, Proposition~\ref{main2nc} reduces  the verification of the required equations to the
case where $G$ is linearly preordered by $x\vdash y$, for which $V=G$.
This is the noncommutative analogue of Theorem~\ref{thmain}.

 The difference between the noncommutative case and the commutative one is the following. 
 In the commutative
case, we give an \emph{explicit} description of the relation $\vdash_x$;  then 
we use Proposition~\ref{main3} to show that we can reason by case distinction, forcing $0\leqslant x$
or $x\leqslant 0$. In the noncommutative case, 
we use  Proposition~\ref{main2nc} to show that we can reason by case distinction, forcing
$a\leqslant b$ or $b\leqslant a$, without recourse to an explicit description of the relation $\leqslant_{a,b}$. The proof is shorter and very smart, but gives less information than in the commutative case.

\section{Examples}

Examples~\ref{exa1} and~\ref{exa2} illustrate Remark~\ref{rem}.

\begin{example} \label{exa1}
The following example is from numerical semigroups.
 
Let us consider the group $\mathbf{Z}=(\mathbb Z,0,+,-)$
preordered by the relation $x \leqslant y$
defined as $y \in x + 60 \mathbb{N}$.
We consider the meet-monoid $(S,0,+,-,\leqslant_S)$ freely generated by $\mathbf{Z}$.
The elements of $S$ are formal finite meets of elements of $\mathbf{Z}$. For example, we have in $S$
$$
a = 10 \wedge 24 \leqslant_S b = 130\wedge 84\text,
$$
since $10 \leqslant 130$ and $24 \leqslant 84$.

Now let us consider the equivariant system of ideals $T_{-7}(S)$ that
we get by forcing $0 \leqslant_{T_{-7}(S)} 7$, i.e.\ $-7\leqslant_{T_{-7}(S)} 0$ (see Proposition~\ref{LorenzenTrick}).

We have $ 3 \leqslant_{T_{-7}(S)} b  $,
since
 $$ 3 \wedge (3+7) \wedge (3+21) = 3 \wedge a \leqslant_S a \leqslant_S b\text.$$

 Yet $3  \wedge (3+21)  \not\leqslant_S b $.
 
On the other hand, we see easily that $-1\leqslant_{U_{1}(S)}0$, so that
 in the regularisation of $S$ we have $0\vdash 1$, which shows that this regularisation is the group $(\mathbb Z,0,+,-)$ with the usual linear order.
  
\end{example}

\begin{example} \label{exa2}
The following similar example is from algebraic number theory.

We consider the ring $\mathbb{Z}[x]$ with $x$ an algebraic integer solution of 
$x^3-x^2+x+7=0$. We denote by $a_1,\dots,a_k\rhd_{d}b$  the Dedekind equivariant system of ideals for the divisibility group $G$ of $\mathbb{Z}[x]$, defined as $b\in (a_1,\dots,a_k)\mathbb{Z}[x]$ for $b$ and the $a_i$'s in the fraction field $\mathbb{Q}(x)$. In fact, the finitely generated fractional ideals form a meet-monoid
$(S,\leqslant_S )$
extending the divisibility group $G$. The corresponding preorder is given
by $a_1\wedge \dots\wedge a_k\leqslant_S b_1\wedge \dots\wedge b_h$ iff each $b_i$ belongs to $(a_1,\dots,a_k)\mathbb{Z}[x]$.

The ring $\mathbb{Z}[x]$  is not integrally closed. The element $y=\frac12(x^2+1)$ of $\mathbb{Q}(x)$ is integral over $\mathbb{Z}$ and a fortiori over $\mathbb{Z}[x]$:  $y^3=y^2-4y+4$, or equivalently $1=z-4z^2+4z^3$ with $z=y^{-1}$.

Let us denote by $\vdash$ the regularisation of $S$. 
Now let us consider, for $u\in S$, the equivariant system of ideals $T_u(S)$
that we get by forcing $u\leqslant_{T_u(S)} 1$.
We see that $1\vdash y$, i.e.\  $z\vdash 1$, by showing $z\leqslant_{T_z(S)} 1$ (which holds by definition) and $z\leqslant_{T_{y}(S)} 1$, which is certified (using  Proposition~\ref{LorenzenTrick}) by  $z,z^2,z^3\leqslant_S 1$,
since the fractional ideal  $z\mathbb{Z}[x]+z^2\mathbb{Z}[x]+z^3\mathbb{Z}[x]$ contains $1$.

Yet $z,z^3\nleqslant_S 1$, as announced in Remark~\ref{rem}, since $z\mathbb{Z}[x]+z^3\mathbb{Z}[x]$ 
does not contain $1$. 
\end{example}

\begin{example} \label{exa3}
Let us consider the group $\mathbf{Z}=(\mathbb{Z},0,+,-)$
preordered by the relation $x = y$.
We compute the corresponding Lorenzen $l$-group.

We denote by $\mathbb{Z}$ the group $(\mathbb Z,0,+,-)$ with the usual order $\leqslant$, and by $\sup$ and $\inf$
the associated supremum and infimum. We denote by $\mathbb{Z}^\circ$ the conversely preordered group.

We consider the meet-monoid $(S,0,+,-,\leqslant_S)$ freely generated by $\mathbf{Z}$.
The elements of $S$ are formal finite meets of elements of $\mathbf{Z}$. We have 
$\bigwedge A \leqslant_S b$ iff $b\in A$, and $\bigwedge A \leqslant_S \bigwedge B$ iff $B\subseteq A$.

We denote by $T_n(S)$ the equivariant system of ideals that we get by forcing $n\leqslant_{T_n(S)}0$. Note that $0\leqslant_{T_{-1}(S)}b$ for $b\geqslant 0$. Using  Proposition~\ref{LorenzenTrick}, we find that $A\leqslant_{T_{-1}(S)} b$ iff $b\geqslant \inf(A)$, and $A\leqslant_{T_{1}(S)} b$ iff $b\leqslant \sup(A)$. We deduce that the regularisation of $S$ can be described as the set of intervals $\lrb{m\mkern3mu.\mkern2mu.\mkern3mun}$ inside $\mathbb{Z}$ with the order by inclusion. Equivalently, it is identified as the set of pairs $(m,n)\in \mathbb{Z}\times\mathbb{Z}^\circ$ such that $m\leqslant n$.  Now it is easy to see that the corresponding Grothendieck $l$-group is $\mathbb{Z}\times\mathbb{Z}^\circ$, where the opposite of $(m,n)$ can be identified with $(-m,-n)$. The canonical morphism $\mathbf{Z}\to \mathbb{Z}\times\mathbb{Z}^\circ$ is $m\mapsto (m,m)$. 

Note that since $\mathbf{Z}$ is the free abelian group on a singleton, we recover in this rather complicated way $\mathbb{Z}\times\mathbb{Z}^\circ$ as the free $l$-group on a singleton. 
\end{example}

\section*{Acknowledgements}

The authors thank the Hausdorff Research Institute for Mathematics
for its hospitality and for providing an excellent research environment
in May and June 2018;
part of this research has been done during its Trimester Program
\emph{Types, Sets and Constructions}.





\end{document}